\documentclass[12pt]{amsart}     

\usepackage{amssymb}
\usepackage{amsmath}
\usepackage{amsthm}

\usepackage{esint}

\usepackage[T2A]{fontenc}
\usepackage[cp1251]{inputenc}
\newtheorem {theorem} {Theorem}

\newtheorem {corollary} [theorem]{Corollary}
\newtheorem {lemma}  [theorem]{Lemma}
\theoremstyle{definition}
\newtheorem{df}{Definition}

\theoremstyle{remark}

\newcommand{\rd}{\mathbb{R}^d}

\begin{document}

\title{Carleson  families of  cubes related to porous sets}

%\subtitle{Do you have a subtitle?\\ If so, write it here}

%\title running{}        % if too long for running head

\author{Andrei V.~Vasin }

\address{Admiral Makarov State University of Maritime and Inland Shipping,
Dvinskaya st.~5/7, St.~Petersburg 198035, Russia}
%\address{St.~Petersburg Department
%of Steklov Mathematical Institute, Fontanka 27, St.~Petersburg
%191023, Russia}
\email{andrejvasin@gmail.com}

%\thanks{This research was supported by the Russian Science Foundation (grant No.~23-11-00171).}

\subjclass[2010]{Primary 28A75; Secondary 28A78}

\keywords{ porous sets, dyadic lattices, Carleson packing condition, sparse families}

\begin{abstract}

 Given  a  porous  set $E\in \mathbb{R}^d$ and a dyadic lattice $\mathcal{D}$, we refine   the Carleson packing condition  and the sparseness property for the  dyadic cover  $\mathcal{D}_E=\{Q \in \mathcal{D}: \:  Q \cap E \neq \varnothing\}$.
  We study the inverse problem,  when a Carleson  family $\mathcal{S} \subset \mathcal{D}$ generates the porous set $E$ such that  $\mathcal{S} \subset \mathcal{D}_E$.

\end{abstract}

\maketitle

\section{Introduction}\label{s_int}
\subsection{Background}
 Studying  the  weak embedding property  of singular inner functions,  Borichev, Nicolau and Thomas  \cite[Lemma 7]{BNT}  found the Carleson packing condition of the family $\mathcal{D}_E$ of dyadic intervals intersecting a porous set $E$  in the unit circle $ \mathbb{T} $.
The  condition which holds in  $ \rd$ rather than in $ \mathbb{T} $ \cite[Theorem 7]{V}  is equivalent   to  the sparseness of this family ( Verbitsky  \cite{VI}).
     The latter  refers to the sparse domination principle for the Calder\'{o}n-Zygmund
 operators etc (see, for instance,
  Lerner-Nazarov\cite {LN} and reference therein).

  In the paper  we  strengthen   the sparseness  property for  $\mathcal{D}_E$,  proving that  one can choose a  disjoint collection   of  cubes as a  disjoint collection of measurable sets; see Theorem \ref{t2} below.
Furthermore, we   characterize the  Carleson packing condition of the family $\mathcal{D}_E$ with respect to the weighted measure
 $\mathrm{dist}^{-\alpha }(x, E)dx $. This one concerns a result obtained by Ivrii and Nicolau \cite[Lemma 3.1]{IN} in one dimension.

    Our motivation comes from   a result  of Ihnatsyeva and V\"{a}h\"{a}kangas \cite[Theorem 2.10]{IV}  for the families      of cubes
\[\mathcal{D}_{\gamma, E}=\{Q \in \mathcal{D}: \:\mathrm{dist} (Q, E)< \gamma \ell(Q)\} \]
with porous   $E$ and  $\gamma >0$. In  \cite{IV} the authors research the  traces of  Triebel--Lizorkin spaces on  Ahlfors--David $\theta$-regular sets $E \in \rd$, which are known to be porous  for   $\theta < d$  by \cite{Mat}.
We study the sparseness and the Carleson properties of  that family (see Theorem \ref{t3},  Corollary \ref{cor3}).

\subsubsection{}
The paper is organized as follows. In Section 1.2--1.3 we recall the notation of dyadic decomposition,    the Carleson packing condition and porous sets.  The main results are formulated in Section 1.4.   The proofs are given  in Section 2.

 \subsection{Dyadic decomposition and Carleson families}
 Throughout this paper, we consider $\rd$ equipped with  the
$d$-dimensional Lebesgue  measure $dx$.    The Lebesgue  measure of $E$ is denoted by $|E|$.
  From the point of view of our proof it is convenient to use  $l_{\infty}$-metric.
   That is, for sets $E,\; F \subset \rd$ we put
   \[\mathrm{dist} (F, E)= \inf_{ x\in F,
  y \in E} \quad \max_{1\leq i \leq d} |x_i-y_i|. \]

We assume that a  cube is half-open and has sides parallel to the
coordinate axes. That is, a cube in $\rd$ is a set of the form
\[Q = [a_1; b_1)\times \dots   \times [a_d; b_d)\]
with side length $\ell(Q) = b_1-a_1 = \dots = b_d-a_d$ and with Lebesgue  measure $|Q|=|b_1-a_1|\times \dots   \times |b_d-a_d|$.
The dyadic decomposition of a cube $R\subset \rd$ is
\[ \mathcal{D}(R)= \bigcup_{j\geq 0} \mathcal{D}_j(R)\]
where each $\mathcal{D}_j(R)$ consists of the $2^{jd}$  pairwise disjoint (half-open) cubes $Q$, with side length
$\ell(Q) = 2^{-j}\ell(R)$, such that
\[R= \bigcup_{Q \in \mathcal{D}_j(R)} Q\]
for every $j =0,1, \dots $. The cubes in  $\mathcal{D}(R)$ are called dyadic cubes or children (with respect to $R$).

Define a dyadic lattice (with respect to $\rd$) by one of definitions in  \cite[\S 2]{LN}.
\begin{df}
  A dyadic lattice $\mathcal{D}$ is any collection of cubes such that
  \begin{enumerate}
    \item[DL1.]  If $ Q \in \mathcal{D}$, then $\mathcal{D}(Q) \subset \mathcal{D}$.

    \item[DL2.] Every 2 cubes $Q', Q''\in  \mathcal{D}$ have a common ancestor, i.e., there
exists $Q\in  \mathcal{D}$ such that $Q', Q''\in \mathcal{D}(Q)$.
    \item[DL3.] For every compact set $K \subset \rd$, there exists a cube $Q\in  \mathcal{D}$
containing $K$.
  \end{enumerate}
\end{df}
In what follows we will use the characterizing property of cubes $Q, \; R \in \mathcal{D}$:
\begin{equation}\label{dyp}
  Q\cap R= \{Q,\;R,\;  \varnothing\}.
\end{equation}

Let  $Q \in \mathcal{D}$. Then there exists a unique dyadic cube $\pi Q \in \mathcal{D}$
such that  $Q \in  \mathcal{D}_1(\pi Q)$, and $|\pi Q|=2^d|Q|$.

  \subsubsection{Carleson families. }A collection of cubes $\mathcal{S}\subset \mathcal{D}$ is called Carleson (satisfies the $\xi$-Carleson packing condition, $\xi >0$) if for each cube $R \in \mathcal{D} $
  \begin{equation}\label{e_carl}
    \sum _ {\substack {Q \in \mathcal{S}\\Q \subset R }}|Q|\leq \xi |R|
  \end{equation}
   \subsubsection{Sparse families. }A collection of cubes $\mathcal{S}\subset \mathcal{D}$ is called sparse  with the constant  $\lambda > 1$,
 if one can choose pairwise disjoint measurable sets
$S(Q) \subset Q \in \mathcal{S}$  such that  $\lambda |S(Q)|\geq  |Q|$ and $S(Q')\cap S(Q'')=\varnothing$ if $Q' \neq Q''$.
     It is known that a family $\mathcal{S}$   is $\xi$-Carleson if and only if it  is $\lambda$-sparse, $\lambda = \xi$ (see, for instance, \cite{VI},  \cite{ LN}) .

Let $E$ be a nonempty set. Cubes $Q' \in \mathcal{D}$ such that
 $ Q' \cap E = \varnothing$ will be called free cubes.
  For each cube $R\in \mathcal{D}$  let
\[\mathcal{D}_E(R)=\{Q \in \mathcal{D}(R): \:  Q \cap E \neq \varnothing\}\]
be a family of all dyadic cubes intersecting $E$,
  and let
\[\mathcal{F}_E(R)= \{Q' \in \mathcal{D}(R): Q' \cap E = \varnothing, \;Q'\neq R,\;  \pi Q'\cap E \neq \varnothing\}\]
 be a family of all maximal   pairwise disjoint dyadic free cubes.
Put
  \[\mathcal{D}_E= \bigcup _{R\in \mathcal{D}} \mathcal{D}_E(R). \]
Introduce a   version of the sparseness property  when  one can choose a disjoint family of cubes as a disjoint family of measurable sets.
  \begin{df}
  A collection of cubes $\mathcal{S}\subset \mathcal{D}$ is called well-sparse
 if there exist  a constant $\lambda >1$ and  a family of  pairwise disjoint cubes
$M(Q) \subset Q$, $Q \in \mathcal{S}$  such that  $\lambda \;|M(Q)|\geq  |Q|$.

  Let a set $E\subset \rd$.  If in the definition of well-sparse family, moreover, one asks     $M(Q)\cap E=\varnothing$   for each  $Q \in \mathcal{S}$, then $\mathcal{S}$ is called well-sparse  with respect to $E$.
   \end{df}

 \subsection{Porous sets }
\begin{df}
  Let $E$ be a nonempty set.
  A set $E\subset\mathbb{R}^d$ is called  porous ($\eta$-porous), if there  is a constant $\eta > 1$ such that for each dyadic cube $R\in\mathcal{D} $ there exists a free cube     $M(R) \subset R$ such that $\eta |M(R)|\geq  |R|$.
   \end{df}
  Instead of dyadic cubes, also general cubes  could be used in the definition of
porosity. However, the dyadic formulation  is convenient from the point of view of our
proofs.
The following properties  are easy to verify using the definition of porosity:
\begin{enumerate}
   \item [P1.] $E\subset \rd$ is   porous if and only if the closure  $\overline{E}$ is porous.
  \item [P2.]If $E\subset \rd$ is  porous, then $|E|=0$. This is a consequence of the Lebesgue
differentiation theorem.
  \item [P3.]Porous set  is nowhere dense.
\end{enumerate}
  \subsubsection{  Dyn'kin's inequality.}       The set   $E \subset \rd$, $|\overline{E}|=0$, is porous (see  \cite{D} in $\mathbb{T}$   or \cite{Dy} in $\rd$) if and only if
  there exist two constants  $0<\alpha <d$ and $C >0$ such that for each cube $R \in \mathcal{D}_E$ one has
 \begin{equation}\label{aaa}
   \sum_{Q' \in \mathcal{F}_E(R)}|Q'|^{1-\alpha /d}\leq C |R|^{1-\alpha /d}.
 \end{equation}
It holds an  integral version of (\ref{aaa}) with respect to the weighted measure
% $d\mu_{\alpha,E}=\mathrm{dist}^{-\alpha }(x, E)dx $ 
  \begin{equation}\label{e_i}
 \mu_{\alpha,E}(R)= \int_{R}\mathrm{dist}^{-\alpha }(x, E)dx \leq C |R|^{1-\alpha/d}.
 \end{equation}

   The supremum of those $\alpha \geq 0$ for which  inequalities (\ref{aaa}) or   (\ref{e_i})
hold with a constant $C=C(E, \alpha)>0$,  equals the   Aikawa--Assouad codimension $\mathrm{codim}_A E$ (see, for instance, \cite{Dy}, \cite{LT}).  Observe that   a set $E \subset \rd$ is porous if and only if $\mathrm{codim}_A E >0$.
\subsubsection{  Carleson property of porous $E$.} The set   $E \subset \rd$, $|\overline{E}|=0$, is porous
if and only if (see  \cite{BNT} in $\mathbb{T}$   or \cite{V} in $\rd$) the collection  $\mathcal{D}_E$ is Carleson.

 \subsection{Main results}

\subsubsection{} The improvement of the Carleson packing property of $\mathcal{D}_E$ for a porous set $E$ is the next theorem. We notice that (ii) is a strengthening of the sparseness property of $\mathcal{D}_E$ (see Section 1.2.2). Further, (iii)  means the uniform $\alpha$-Carleson-Beurling  condition (see \cite[Lemma 3.1]{IN}). Finally, (iv) is  the Carleson packing condition of $\mathcal{D}_E$ with respect to the weighted measure
 $\mu_{\alpha,E}$ in (\ref{e_i}).

\begin{theorem} \label{t1}
Let    $E\subset\mathbb{R}^d$ and  let  $\mathcal{D}$  be a dyadic lattice.  The following properties are equivalent.
\begin{enumerate}

   \item[(i)]
    $E$ is porous.
 \item[(ii)]
 The family $\mathcal{D}_E$  is  well-sparse with respect to $E$.
 \item[(iii)]  There exist constants $ \alpha > 0 $ and $ \zeta >0$ such that  for each cube $R \in \mathcal{D}$
    \begin{equation}\label{e_eqvi}
 \sum_{Q \in \mathcal{D}_E (R)}|Q|^{1-\alpha /d}\leq \zeta\; |R|^{1-\alpha /d}.
  \end{equation}
   \item[(iv)]    $|\overline{E}|=0$ and for each cube $R \in \mathcal{D}$
     \begin{equation}\label{e_dyn}
  \sum _ {Q \in \mathcal{D}_E(R)}\mu_{\alpha,E}(Q)\leq \xi\;\mu_{\alpha,E}(R)
  \end{equation}
  with   constants $\alpha > 0$ and $\xi>0$ independent of $R$.
   \end{enumerate}
\end{theorem}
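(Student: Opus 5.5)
The plan is to prove the cycle (i)$\Rightarrow$(ii)$\Rightarrow$(i), then (i)$\Rightarrow$(iii)$\Rightarrow$(i), and finally (i)$\Rightarrow$(iv)$\Rightarrow$(i). Throughout I use one elementary fact. If $Q\cap E\neq\varnothing$ and $x\in Q$, then $\mathrm{dist}(x,E)\le \ell(Q)$ in the $l_\infty$-metric.

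\emph{(i)$\Leftrightarrow$(ii).} The direction (ii)$\Rightarrow$(i) is immediate. If $R\cap E=\varnothing$, take $M(R)=R$. Otherwise $R\in\mathcal{D}_E$, and the cube $M(R)$ from well-sparseness is free with $\lambda|M(R)|\ge |R|$.

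For (i)$\Rightarrow$(ii), fix $k$ with $2^{kd}\ge\eta$. Porosity gives, for each $Q\in\mathcal{D}_E$, a free dyadic subcube of side $\ge 2^{-k}\ell(Q)$. This cube lies in a maximal free cube $F\in\mathcal{F}_E(Q)$, and then $Q=\pi^jF$ for some $1\le j\le k$. Hence each maximal free cube $F$ (these are pairwise disjoint across all of $\mathcal{D}$) is selected by at most $k$ cubes $Q$, namely $\pi F,\dots,\pi^kF$. Choose $m$ with $2^{md}\ge k$. Split $F$ into its $2^{md}$ dyadic descendants of generation $m$, and give $Q=\pi^jF$ the $j$-th descendant as $M(Q)$. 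The resulting cubes $M(Q)$ are pairwise disjoint and free, and they satisfy $\eta 2^{md}|M(Q)|\ge|Q|$.

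\emph{(i)$\Leftrightarrow$(iii).} For (i)$\Rightarrow$(iii), take $0<\alpha<d$ in the range where Dyn'kin's integral inequality (\ref{e_i}) holds. Use the well-sparse cubes $M(Q)$ from (ii). For $x\in M(Q)\subset Q$ we have $|Q|^{-\alpha/d}=\ell(Q)^{-\alpha}\le \mathrm{dist}^{-\alpha}(x,E)$, so
\[
|Q|^{1-\alpha/d}\le\lambda|M(Q)|\,|Q|^{-\alpha/d}\le \lambda\int_{M(Q)}\mathrm{dist}^{-\alpha}(x,E)\,dx .
\]
Summing over $Q\in\mathcal{D}_E(R)$ and using disjointness gives a bound of $\lambda\,\mu_{\alpha,E}(R)$. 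By (\ref{e_i}) this is at most $\lambda C|R|^{1-\alpha/d}$ when $R\in\mathcal{D}_E$. If $R\notin\mathcal{D}_E$, the sum is empty.

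For (iii)$\Rightarrow$(i), argue by contradiction. Suppose $R$ meets $E$ but has no free dyadic subcube of side $\ge 2^{-k}\ell(R)$. Then every cube of generations $0,\dots,k$ in $\mathcal{D}(R)$ belongs to $\mathcal{D}_E(R)$. Generation $j$ contributes $2^{jd}\,2^{-jd(1-\alpha/d)}|R|^{1-\alpha/d}=2^{j\alpha}|R|^{1-\alpha/d}$. The total is therefore $\ge\sum_{j\le k}2^{j\alpha}|R|^{1-\alpha/d}$, which exceeds $\zeta|R|^{1-\alpha/d}$ once $k$ is large. Hence a free cube with $\eta=2^{kd}$ exists.

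\emph{(i)$\Leftrightarrow$(iv).} For (i)$\Rightarrow$(iv), P1 and P2 give $|\overline E|=0$. Take $\alpha$ as above, so that both (\ref{e_i}) and (iii) hold. For $Q\in\mathcal{D}_E(R)$, (\ref{e_i}) gives $\mu_{\alpha,E}(Q)\le C|Q|^{1-\alpha/d}$, and summing with (iii) gives a bound of $C\zeta|R|^{1-\alpha/d}$. On the other hand, for $R\in\mathcal{D}_E$ the fact above gives $\mathrm{dist}(x,E)\le\ell(R)$ on $R$, so $\mu_{\alpha,E}(R)\ge |R|\ell(R)^{-\alpha}=|R|^{1-\alpha/d}$. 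This yields (\ref{e_dyn}) with $\xi=C\zeta$.

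For (iv)$\Rightarrow$(i), I would repeat the generation-counting argument. If $R\in\mathcal{D}_E$ has no free subcube of generation $\le k$, then each generation $j\le k$ is a partition of $R$ into cubes of $\mathcal{D}_E(R)$. The left side of (\ref{e_dyn}) is then at least $(k+1)\mu_{\alpha,E}(R)$, which contradicts (\ref{e_dyn}) once $k+1>\xi$.

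\emph{Main obstacle.} This last step requires $0<\mu_{\alpha,E}(R)<\infty$. Positivity is clear. Finiteness, however, is not automatic from $|\overline E|=0$: for a fat-dimensional null set one may have $\mu_{\alpha,E}\equiv\infty$ on every cube meeting $E$, and then (\ref{e_dyn}) is vacuous. I would handle this by reading (iv) as including local integrability of $\mathrm{dist}^{-\alpha}(\cdot,E)$. If that reading is not available, I would run the counting argument with the truncated weights $\min(\mathrm{dist}^{-\alpha},N)$ and check whether (\ref{e_dyn}) survives the truncation. I expect this integrability issue to be the delicate point of the whole proof.
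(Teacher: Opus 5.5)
Your argument is correct, modulo the finiteness issue you flag yourself (see below). In several steps it is more direct than the paper's.

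For (i)$\Rightarrow$(ii), the paper builds the disjoint family by induction over generations (Lemmas \ref{lem} and \ref{lem1}). For each cube it chooses a largest free cube inside a child or grandchild not yet used by an ancestor. It then re-runs this on the siblings while climbing through ancestors via DL3. Your global pigeonhole replaces all of this bookkeeping: a maximal free cube $F$ can only be claimed by $\pi F,\dots,\pi^kF$, so splitting it into $2^{md}\ge k$ pieces suffices. The cost is a constant growing like $\eta\log\eta$ instead of the paper's $2^d\eta$.

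For (i)$\Rightarrow$(iii), both proofs combine well-sparseness with (\ref{e_i}). Your pointwise bound $\mathrm{dist}(x,E)\le\ell(Q)$ is exactly what the comparison with $\mathrm{dist}^{-\alpha}(x,E)$ requires. The separation (\ref{e_d1}) of the paper's grandchildren $mQ$ points the other way.

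For (iii)$\Rightarrow$(i), the paper passes to the Carleson condition and invokes \cite{V}. Your count of full generations is self-contained and uses neither $|\overline E|=0$ nor $\alpha>0$.

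For (iv)$\Rightarrow$(i), the paper expands both sides over $\mathcal{F}_E(R)$ via (\ref{ccc}) and counts the $\log_2(\ell(R)/\ell(Q'))$ ancestors of each $Q'$. Your generation count is the same mechanism without that decomposition.

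The \emph{main obstacle} you identify is genuine and not an artifact of your method. The paper's (iv)$\Rightarrow$(i) ends by cancelling $\sum_{Q'\in\mathcal{F}_E(R)}\mu_{\alpha,E}(Q')=\mu_{\alpha,E}(R)$, tacitly assuming it is finite. No truncation argument can remove this, because (iv) read literally does not imply (i). Here is a counterexample in $d=1$:
\begin{itemize}
\item Let $E$ be the Cantor set obtained from $[0,1]$ by deleting, at stage $n$, the open middle part of relative length $1/(n+1)$ of each remaining interval.
\item The stage-$n$ intervals have length $2^{-n}/(n+1)$ and total length $1/(n+1)$, so $|E|=0$.
\item $E$ is not porous, since gaps inside a stage-$n$ interval have relative length at most $1/(n+2)$.
\item $\int_I\mathrm{dist}^{-\alpha}(x,E)\,dx=\infty$ for every $\alpha>0$ and every open interval $I$ meeting $E$.
\end{itemize}
Now check (\ref{e_dyn}) for a dyadic $R$. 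If $R$ has a point of $E$ in its interior, both sides of (\ref{e_dyn}) are $+\infty$. If $E\cap R$ is only the left endpoint of $R$, a direct computation gives (\ref{e_dyn}) with $\xi=\xi(\alpha)$ for $0<\alpha<1$. If $E\cap R=\varnothing$, it is trivial. So (iv) holds while (i) fails.

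The right fix is therefore your first one. In (iv), replace $|\overline E|=0$ by local integrability of $\mathrm{dist}^{-\alpha}(\cdot,E)$. This implies $|\overline E|=0$, and (i) supplies it through (\ref{e_i}) for admissible $\alpha$. Under that reading your proof is complete.
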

Notice that  for some problems
(\ref{e_dyn}) is preferable to (\ref{e_eqvi}) because of the Carleson  condition on measure,
but   (\ref{e_eqvi}) has the advantage that it  contains terms $|Q|^{1-\alpha /d}$   independent  of  $E$,  whereas (\ref{e_dyn})
is not. See further Corollary \ref{cor5}.

The proof of Theorems \ref{t1}   yields   (\ref{e_eqvi}) and  (\ref{e_dyn})  for each
$ \alpha< \mathrm{codim}_A E$. We have an additional  characterization  of the Aikawa--Assouad codimension.
 \begin{corollary}\label{eqvi}
   Let  $E\subset \rd$ be a set with  $|\overline{E}|=0$. Then    $\mathrm{codim}_A E$  can be calculated as
    the supremum either  of those $\alpha \geq 0$ such that   property  (\ref{e_eqvi}) (or  (\ref{e_dyn}))
              holds.
       \end{corollary}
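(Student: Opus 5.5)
We have to show that $\mathrm{codim}_A E$ equals $\alpha_1:=\sup\{\alpha\ge0:\ (\ref{e_eqvi})\text{ holds}\}$, and also equals $\alpha_2:=\sup\{\alpha\ge0:\ (\ref{e_dyn})\text{ holds}\}$. The plan is to compare each of these suprema with the supremum of exponents in Dyn'kin's inequality (\ref{aaa}) and in its integral form (\ref{e_i}). By the discussion in Section 1.3.1, that common supremum is $\mathrm{codim}_A E$. If $E$ is not porous, then $\mathrm{codim}_A E=0$, and by Theorem \ref{t1} neither (\ref{e_eqvi}) nor (\ref{e_dyn}) holds for any $\alpha>0$, so both suprema are $0$ as well. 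We may therefore assume $E$ is porous.

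\emph{Lower bound.} We need $\alpha_1,\alpha_2\ge \mathrm{codim}_A E$. As remarked after Theorem \ref{t1}, its proof gives (\ref{e_eqvi}) and (\ref{e_dyn}) for every $\alpha<\mathrm{codim}_A E$, which is exactly this inequality. If we prefer to make this explicit, we fix $\alpha<\beta<\mathrm{codim}_A E$ and sum (\ref{aaa}) with exponent $\beta$ over generations. Every $Q\in\mathcal{D}_E(R)$ with $\ell(Q)=2^{-j}\ell(R)$ has free descendants of total measure comparable to $|Q|$ (by porosity), and these are organised through the families $\mathcal{F}_E$. The gap $\beta-\alpha$ then produces a convergent geometric series in $j$.

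\emph{Upper bound for (\ref{e_eqvi}).} Suppose (\ref{e_eqvi}) holds with exponent $\alpha$. Each $Q'\in\mathcal{F}_E(R)$ has parent $\pi Q'\in\mathcal{D}_E(R)$, and a given parent has at most $2^d$ children. Hence
\[\sum_{Q'\in\mathcal{F}_E(R)}|Q'|^{1-\alpha/d}\le 2^d\sum_{Q\in\mathcal{D}_E(R)}|Q|^{1-\alpha/d}\le 2^d\zeta|R|^{1-\alpha/d},\]
which is (\ref{aaa}) with the same $\alpha$. Therefore $\alpha\le\mathrm{codim}_A E$, and so $\alpha_1\le \mathrm{codim}_A E$.

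\emph{Upper bound for (\ref{e_dyn}).} This is the step I expect to be the main obstacle, because the measure $\mu_{\alpha,E}$ depends on $E$. Suppose (\ref{e_dyn}) holds with exponent $\alpha$. For $x\in Q'\in\mathcal{F}_E(R)$, the parent $\pi Q'$ meets $E$, so $\mathrm{dist}(x,E)\le \ell(\pi Q')=2\ell(Q')$. This gives $\mu_{\alpha,E}(Q')\ge c\,|Q'|^{1-\alpha/d}$, and the same pointwise bound applies to $\mu_{\alpha,E}(\pi Q')$. Each such $\pi Q'$ belongs to $\mathcal{D}_E(R)$, so
\[\sum_{Q'\in\mathcal{F}_E(R)}|Q'|^{1-\alpha/d}\lesssim \xi\,\mu_{\alpha,E}(R).\]
To reach (\ref{aaa}) it remains to bound $\mu_{\alpha,E}(R)\lesssim|R|^{1-\alpha/d}$ when $R\in\mathcal{D}_E$, that is, to establish (\ref{e_i}). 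I would derive this from (\ref{e_dyn}) by a self-improving argument. Split $R$ into free cubes $Q'\in\mathcal{F}_E(R)$, on each of which $\mathrm{dist}\approx\ell(Q')$, plus the null set $\overline E$. This gives
\[\mu_{\alpha,E}(R)\approx\sum_{Q'\in\mathcal{F}_E(R)}|Q'|^{1-\alpha/d}.\]
So (\ref{aaa}) and (\ref{e_i}) are equivalent up to constants, and the question becomes whether (\ref{e_dyn}) forces finiteness and this bound. If a direct argument fails, I would fall back on the fact that (\ref{e_dyn}) for $R$ implies it with a slightly smaller exponent. Then I would use the Carleson condition (\ref{e_dyn}) to see that $\mu_{\alpha,E}$ restricted to $\mathcal{D}_E(R)$ decays geometrically along generations. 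Comparing this decay with the Lebesgue scaling $|Q|^{1-\alpha/d}$ should yield (\ref{e_i}) for every $\alpha'<\alpha$. That still gives $\alpha_2\le\mathrm{codim}_A E$, since only the supremum matters.
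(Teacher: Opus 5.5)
\textbf{Where your proof matches the paper.} Your lower bound and your upper bound for (\ref{e_eqvi}) follow the paper's own proof. The paper also quotes the proof of Theorem~\ref{t1} for every $\alpha<\mathrm{codim}_A E$. It also passes from (\ref{e_eqvi}) to Dyn'kin's inequality (\ref{aaa}) using the map $Q'\mapsto\pi Q'$, which is at most $2^d$-to-one from $\mathcal{F}_E(R)$ into $\mathcal{D}_E(R)$.

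\textbf{The gap.} The gap is the upper bound for (\ref{e_dyn}). You correctly reduce it to proving (\ref{e_i}) at the exponent $\alpha$, but you never prove that, and the ingredients you list do not deliver it.

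\emph{Free cubes.} It is false that $\mathrm{dist}(x,E)\approx\ell(Q')$ on a free cube $Q'\in\mathcal{F}_E(R)$. Only $\mathrm{dist}(x,E)\le 2\ell(Q')$ holds, because $E$ may accumulate on $\partial Q'$ from the neighbouring cubes. For instance, if $E$ contains an upper face of $Q'$ and $\alpha\ge 1$, then $\mu_{\alpha,E}(Q')=\infty$. So only $\mu_{\alpha,E}(R)\gtrsim\sum_{\mathcal{F}_E(R)}|Q'|^{1-\alpha/d}$ is available, and the reverse inequality is as hard as (\ref{e_i}) itself.

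\emph{Finiteness.} Inequality (\ref{e_dyn}) cannot force finiteness, since it is vacuous when $\mu_{\alpha,E}(R)=\infty$. Take $E$ a hyperplane and any $\alpha\in[1,d)$. Then $\mu_{\alpha,E}(R)=\infty$ for every $R\in\mathcal{D}_E$, so (\ref{e_dyn}) holds, while $\mathrm{codim}_A E=1$. Hence the (\ref{e_dyn}) half of the statement needs the standing assumption $\mu_{\alpha,E}(R)<\infty$, and the proof must actually use it. The same caveat affects your non-porous case, which relies on Theorem~\ref{t1} (iv)$\Rightarrow$(i).

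\emph{The decay argument.} The geometric decay that (\ref{e_dyn}) really provides (John--Nirenberg for Carleson families) is decay relative to $\mu_{\alpha,E}(R)$. Combined with $\mu_{\alpha,E}(Q)\ge|Q|^{1-\alpha/d}$ on $\mathcal{D}_E$, it gives
\[\#\{Q\in\mathcal{D}_E(R):\ell(Q)=2^{-k}\ell(R)\}\lesssim 2^{k(d-\alpha-c)}\,\mu_{\alpha,E}(R)/|R|^{1-\alpha/d}.\]
Every bound of this kind, at exponent $\alpha$ or at any $\alpha'<\alpha$, carries the factor $\mu_{\alpha,E}(R)/|R|^{1-\alpha/d}$. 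Its uniform boundedness is exactly (\ref{e_i}), so comparing with Lebesgue scaling is circular.

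\textbf{What the paper does here.} The paper does not close this step either. It asserts that, by Theorem~\ref{t1}, one may switch from (\ref{e_dyn}) to (\ref{e_eqvi}) at the same $\alpha$. But Theorem~\ref{t1} only yields some exponent for each condition. The comparability (\ref{ddd}) behind the switch presupposes (\ref{e_i}) at $\alpha$.

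\textbf{What is actually available.} From (\ref{e_dyn}) and $\mu_{\alpha,E}(Q)\ge|Q|^{1-\alpha/d}$ for $Q\in\mathcal{D}_E$ you get two facts. First, $T(R):=\sum_{Q\in\mathcal{D}_E(R)}|Q|^{1-\alpha/d}\le\xi\,\mu_{\alpha,E}(R)$. Second, $\sum_{P\in\mathcal{D}_E(R)}\bigl(1+\log_2\tfrac{\ell(R)}{\ell(P)}\bigr)|P|^{1-\alpha/d}\le\xi^2\mu_{\alpha,E}(R)$. Now suppose $\mu_{\alpha,E}(R)<\infty$ and $\mu_{\alpha,E}(R)\le C\bigl(T(R)+|R|^{1-\alpha/d}\bigr)$. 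Then a Chebyshev argument in the depth variable bounds $T(R)/|R|^{1-\alpha/d}$ uniformly, since generation $k$ contributes at most $2^{k\alpha}|R|^{1-\alpha/d}$. That is (\ref{e_eqvi}) at $\alpha$.

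For $\alpha<1$ the needed upper bound holds. It follows from $\mathrm{dist}(x,E)^{-\alpha}\le\mathrm{dist}(x,E\cap R)^{-\alpha}+\mathrm{dist}(x,\partial R)^{-\alpha}$ together with a Whitney count. For $\alpha\ge 1$ this breaks down: the contribution to $\mu_{\alpha,E}(R)$ from $E\setminus R$ near $\partial R$ is exactly what remains uncontrolled.
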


\subsubsection{}
Consider the inverse problem.
  Let a collection of cubes $\mathcal{S}\subset \mathcal{D}$ satisfy to the   Carleson packing condition. We want to find a set $E$ such that $ \mathcal{S} \subset \mathcal{D}_E$. Clearly, if such a set exists,  then for each $Q \in \mathcal{S} \subset \mathcal{D}_E$ it    necessarily implies
$\pi Q \in \mathcal{D}_E$.   It turned out  that if we assume  $\pi Q \in \mathcal{S}$ for each $Q \in \mathcal{S}$, then this  regularity type condition  is sufficient.
\begin{theorem} \label{t2}
 Given  a Carleson collection of cubes $\mathcal{S}\subset \mathcal{D}$ such that  $\pi Q \in \mathcal{S}$ for each cube $Q\in \mathcal{S}$. Then there is a porous set $E$ with
    $ \mathcal{S} \subset \mathcal{D}_E$.
\end{theorem}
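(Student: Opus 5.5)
The plan is to construct $E$ explicitly as the set of centers of the cubes of $\mathcal{S}$:
\[
E=\{c_Q:\ Q\in\mathcal{S}\},
\]
where $c_Q$ is the center of $Q$. Then $c_Q\in Q$ for every $Q\in\mathcal{S}$, so $\mathcal{S}\subset\mathcal{D}_E$ holds automatically. The whole task is to verify porosity, namely that every $R\in\mathcal{D}$ contains a free cube $M(R)$ with $|R|\le \eta|M(R)|$, where $\eta$ depends only on $d$ and the Carleson constant $\xi$. We take $E$ to be this set itself, not its closure; by P1 this makes no difference.

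\emph{Step 1: locating centers of large cubes.} Let $P\in\mathcal{D}$ and let $Q'\in\mathcal{D}(P)$ be a proper dyadic descendant of $P$. The point $c_P$ is the common vertex of the $2^d$ children of $P$. Since cubes are half-open, $c_P$ belongs only to the child whose minimal corner is $c_P$. Inside that child, $c_P$ belongs only to descendants whose minimal corner equals $c_P$. Hence $c_P\in Q'$ forces $c_P$ to be the minimal corner of $Q'$. Consequently, for every cube $Q'$ the set $Q'\cap\{c_P: Q'\subsetneq P\}$ has at most one point, the minimal corner of $Q'$. This point lies in the child of $Q'$ at that corner. In particular, the child $Q''$ of $Q'$ at the opposite (maximal) corner contains no center of any strict ancestor of $Q'$.

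\emph{Step 2: using that $\mathcal{S}$ is closed under taking parents.} Suppose $Q'\notin\mathcal{S}$. If some $Q\in\mathcal{S}$ satisfied $Q\subset Q'$, then iterating $\pi$ would give $Q'\in\mathcal{S}$, which is impossible. So no cube of $\mathcal{S}$ lies inside $Q'$, and every point of $E\cap Q'$ is the center of a strict ancestor of $Q'$. By Step 1, the cube $Q''$ defined there (the child of $Q'$ at its maximal corner) is free. It satisfies $|Q'|=2^d|Q''|$.

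\emph{Step 3: finding, via the Carleson condition, a cube of controlled size outside $\mathcal{S}$.} Fix $R\in\mathcal{D}$ and let $N_j$ be the number of cubes of $\mathcal{S}\cap\mathcal{D}_j(R)$. The condition (\ref{e_carl}) gives
\[
\sum_{j=0}^{k}N_j2^{-jd}|R|\le \xi|R|.
\]
Choose $k=\lceil\xi\rceil$, so that $k+1>\xi$. Then some $j\le k$ satisfies $N_j2^{-jd}\le \xi/(k+1)<1$, that is, $N_j<2^{jd}=\#\mathcal{D}_j(R)$. Hence some $Q'\in\mathcal{D}_j(R)$ is not in $\mathcal{S}$, and Step 2 yields a free cube $Q''\subset Q'\subset R$ with
\[
|R|=2^{(j+1)d}|Q''|\le 2^{(\lceil\xi\rceil+1)d}|Q''|.
\]
(This argument also covers the case $R\notin\mathcal{S}$, since then $j=0$ already works.) So $E$ is $\eta$-porous with $\eta=2^{(\lceil\xi\rceil+1)d}$.

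The delicate point is Step 1. For a careless choice of the points $x_Q\in Q$, the infinitely many ancestors of $R$ (and the ancestors of $Q'$ inside $R$) could place points in every small subcube of $Q'$. Choosing exact centers together with half-open cubes confines all of these points to one corner of each cube, so they cost at most one child. The remaining steps are routine counting.
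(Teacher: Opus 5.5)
Your argument is correct, but it reaches porosity by a different route than the paper.

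\emph{The paper's route.} The paper takes $E$ to be the set of left-down corners $a_Q$, $Q\in\mathcal{S}$. It uses essentially your Steps 1--2: parent-closedness forces every cube of $\mathcal{D}_E\setminus\mathcal{S}$ onto the corner chain $\mathcal{P}(Q)$ of some $Q\in\mathcal{S}$. It then proves that the whole family $\mathcal{D}_E$ is Carleson by summing geometric series along these chains, with constant $\xi+\frac{2^d}{2^d-1}(1+\xi)$. Porosity is finally obtained from the equivalence between porosity of $E$ and the Carleson property of $\mathcal{D}_E$.

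\emph{What you do differently.} Using centers instead of corners is only cosmetic. With corners, every child of $Q'\notin\mathcal{S}$ other than the one at $a_{Q'}$ is free, for the same reason. The real difference is your Step 3. You pigeonhole the Carleson sum of $\mathcal{S}$ itself over the first $\lceil\xi\rceil+1$ generations of $R$ to find a cube $Q'\notin\mathcal{S}$ of controlled size, and Step 2 turns it into a free cube. This is exactly the elementary argument behind ``$\mathcal{D}_E$ Carleson $\Rightarrow$ $E$ porous'', applied to $\mathcal{S}$ instead of $\mathcal{D}_E$. As a result, the intermediate packing estimate for $\mathcal{D}_E$ is not needed.

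\emph{What each route buys.} Your route is shorter, self-contained, and gives the explicit constant $\eta=2^{(\lceil\xi\rceil+1)d}$. It also avoids appealing to the equivalence, which the paper states under the hypothesis $|\overline{E}|=0$ and does not check for the constructed set. The paper's route yields, as a by-product, an explicit Carleson constant for the whole family $\mathcal{D}_E$ in terms of $\xi$.

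One small point applies to both versions. In the degenerate case $\mathcal{S}=\varnothing$ the constructed $E$ is empty, while porous sets are required to be nonempty, so a trivial separate choice is needed (e.g.\ a single point).
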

   Applying the proof of Theorem \ref{t1}, we establish the related  result omitting an auxiliary    set $E$.

    \begin{corollary}\label{cor5}
       Given a Carleson collection of cubes $\mathcal{S}\subset \mathcal{D}$ such that    $\pi Q \in \mathcal{S}$ for each   cube $Q\in \mathcal{S}$. Then $\mathcal{S}$  is well-sparse and there exist two constants $ \alpha >0$ and $\zeta >0$ such that  for each cube $R \in \mathcal{D}$
      \[ \sum_{\substack {Q \in \mathcal{S}\\ Q\subset R}}|Q|^{1-\alpha /d}\leq \zeta \;|R|^{1-\alpha /d}.\]
        \end{corollary}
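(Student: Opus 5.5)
The plan is to derive Corollary \ref{cor5} from Theorem \ref{t2} and Theorem \ref{t1}, without any new combinatorics. First, Theorem \ref{t2} supplies a porous set $E$ with $\mathcal{S}\subset\mathcal{D}_E$. Then Theorem \ref{t1} ((i)$\Rightarrow$(ii) and (i)$\Rightarrow$(iii)) says that $\mathcal{D}_E$ is well-sparse with respect to $E$, and that (\ref{e_eqvi}) holds for some $\alpha>0$, $\zeta>0$.

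Both properties pass to subfamilies. For well-sparseness, keep the cubes $M(Q)$, $Q\in\mathcal{S}$, chosen for the larger family $\mathcal{D}_E$. They stay pairwise disjoint, they satisfy $M(Q)\subset Q$ and $\lambda|M(Q)|\ge|Q|$, and they are even free with respect to $E$. For the summation inequality, fix $R\in\mathcal{D}$. Every $Q\in\mathcal{S}$ with $Q\subset R$ lies in $\mathcal{D}(R)$ and meets $E$, so it belongs to $\mathcal{D}_E(R)$. Since all terms are nonnegative,
\[
\sum_{\substack{Q\in\mathcal{S}\\ Q\subset R}}|Q|^{1-\alpha/d}\le \sum_{Q\in\mathcal{D}_E(R)}|Q|^{1-\alpha/d}\le \zeta|R|^{1-\alpha/d}.
\]
Here I use that $Q\subset R$ with $Q,R\in\mathcal{D}$ implies $Q\in\mathcal{D}(R)$, which follows from (\ref{dyp}) and DL2.

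The remark ``applying the proof of Theorem \ref{t1}, omitting an auxiliary set $E$'' points to a direct route as well. One would rerun the argument of (i)$\Rightarrow$(ii),(iii) with $\mathcal{S}$ in place of $\mathcal{D}_E$, using only the Carleson condition and closure under $\pi$. Closure under $\pi$ makes $\mathcal{S}\cap\mathcal{D}(R)$ a union of trees. The Carleson condition then plays the role of porosity: the complement of $\mathcal{S}$ inside $R$ contains a cube of size comparable to $R$, by a pigeonhole count over the generations. From this one would obtain a geometric decay $\sum_{Q\in\mathcal{S},\,Q\subset R,\,\ell(Q)=2^{-k}\ell(R)}|Q|\le C\theta^k|R|$ with $\theta<1$, and taking $\alpha$ small, so that $2^{k\alpha}\theta^k$ is summable, would give the inequality.

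The main obstacle is entirely inside Theorem \ref{t2}, which I am assuming: constructing a porous $E$ that meets every cube of $\mathcal{S}$. Given that theorem, the corollary is a short monotonicity argument. The only point to check is that the constants $\lambda,\alpha,\zeta$ coming from Theorem \ref{t1} depend only on the porosity constant of $E$, and hence only on the Carleson constant of $\mathcal{S}$.
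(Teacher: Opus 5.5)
Your proposal is correct and matches the paper's approach. The paper gives no separate proof of Corollary~\ref{cor5}, but its proofs of Theorem~\ref{t3} (ii)$\Rightarrow$(iii) and Corollary~\ref{cor3}(i) use exactly your argument: Theorem~\ref{t2} gives a porous $E$ with $\mathcal{S}\subset\mathcal{D}_E$, and properties (ii) and (iii) of Theorem~\ref{t1} for $\mathcal{D}_E$ pass to the subfamily $\mathcal{S}$ by monotonicity. The uniformity of $\lambda,\alpha,\zeta$ in terms of the Carleson constant, which you flag at the end, is not needed, since the statement only asks for the existence of some constants.
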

        \subsubsection{}
Applications to the larger families  $\mathcal{D}_{\gamma, E}$ introduced in  \cite{IV}.
  \begin{df}
     For  $E \subset \rd$   and a positive constant $\gamma$ define  the family of cubes, which are relatively close  to the set $E$:
     \begin{equation}\label{eee}
       \mathcal{D}_{\gamma, E}=\{Q \in \mathcal{D}: \: \mathrm{dist}(Q, E) < \gamma \ell(Q)\}.
     \end{equation}
  \end{df}
    Clearly,
 $ \mathcal{D}_E  \subset \mathcal{D}_{\gamma, E}$.
 We obtain  the characterization of a porous set $E$ in terms of the family  $\mathcal{D}_{\gamma, E} $.

\begin{theorem} \label{t3}
Let   $E\subset\mathbb{R}^d$ be a set and $\gamma >0$.  The following properties are equivalent.
\begin{enumerate}
  \item[(i)] The set $E$ is porous.
  \item[(ii)] The family
 $\mathcal{D}_{\gamma, E}$  satisfies the   Carleson packing condition.
 \item[(iii)] The family
 $\mathcal{D}_{\gamma, E}$  is well-sparse with respect to $E$.
      \end{enumerate}
   \end{theorem}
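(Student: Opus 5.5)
The plan is to prove the cycle (i)$\Rightarrow$(iii)$\Rightarrow$(ii)$\Rightarrow$(i), using Theorem \ref{t1} for the porous case. The key device is to compare $\mathcal{D}_{\gamma,E}$ with $\mathcal{D}_E$ a bounded number of generations up.

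\textbf{(iii)$\Rightarrow$(ii).} A well-sparse family with $\lambda|M(Q)|\ge|Q|$ is Carleson with $\xi=\lambda$. Indeed, for $R\in\mathcal{D}$ the cubes $M(Q)$, $Q\subset R$, are pairwise disjoint and contained in $R$. Hence
\[\sum_{Q\subset R,\;Q\in\mathcal{D}_{\gamma,E}}|Q|\le \lambda\sum_{Q\subset R}|M(Q)|\le\lambda|R|.\]

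\textbf{(ii)$\Rightarrow$(i).} Since $\mathcal{D}_E\subset\mathcal{D}_{\gamma,E}$, the family $\mathcal{D}_E$ is Carleson. If $|\overline{E}|=0$, the cited result of \cite{V} gives porosity. To get $|\overline E|=0$ without assuming it, I would apply the Carleson condition to $\mathcal{D}_{\gamma,E}$. If $|\overline E|>0$, take a density point $x$ of $\overline E$ and a small cube $R\ni x$. Every dyadic $Q\subset R$ that meets $\overline E$ satisfies $\mathrm{dist}(Q,E)=0<\gamma\ell(Q)$, so $Q\in\mathcal{D}_{\gamma,E}$. At each generation $j$ these cubes cover $\overline E\cap R$, so their total measure is at least $|\overline E\cap R|\ge |R|/2$. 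Summing over $j=0,\dots,N$ gives at least $N|R|/2$, which contradicts the Carleson bound $\xi|R|$ once $N>2\xi$.

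\textbf{(i)$\Rightarrow$(iii).} Let $E$ be porous. By Theorem \ref{t1}(ii), $\mathcal{D}_E$ is well-sparse with respect to $E$, with cubes $M(P)$ and constant $\lambda$. Fix $k$ with $2^k>2\gamma+1$.
\begin{itemize}
\item Take $Q\in\mathcal{D}_{\gamma,E}$. Its $k$-th ancestor $\pi^kQ$ contains $Q$ and has side $2^k\ell(Q)$.
\item The point of $E$ within $l_\infty$-distance $\gamma\ell(Q)$ of $Q$ need not lie in $\pi^kQ$, since $Q$ may sit at its boundary. It does, however, lie in one of the at most $3^d$ dyadic cubes $P$ of the same generation as $\pi^kQ$ that are adjacent to or equal to $\pi^kQ$.
\item So $Q\mapsto P$ assigns to each $Q$ some $P\in\mathcal{D}_E$ with $\ell(P)=2^k\ell(Q)$ and $\mathrm{dist}(P,Q)=0$.
\item Each $P$ receives at most $5^{d}2^{kd}$ cubes $Q$: they are contained in the $3\ell(P)$-neighbourhood cube of $P$ and have fixed side $2^{-k}\ell(P)$.
\end{itemize}
I would then define $M(Q)$ using the porosity of $E$ applied to $Q$ itself: choose a free cube $M_0(Q)\subset Q$ with $\eta|M_0(Q)|\ge|Q|$. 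The difficulty is making these pairwise disjoint across all $Q\in\mathcal{D}_{\gamma,E}$. This is the main obstacle.

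First approach. Run the argument of Theorem \ref{t1}(ii) directly on $\mathcal{D}_{\gamma,E}$, i.e. redo the construction there for the larger family. The point is that $\mathcal{D}_{\gamma,E}$ inherits the needed Carleson/decay estimate from the map $Q\mapsto P$: for $R\in\mathcal{D}$,
\[
\sum_{Q\in\mathcal{D}_{\gamma,E},\,Q\subset R}|Q|^{1-\alpha/d}\le C\sum_{P\in\mathcal{D}_E,\;P\subset \widetilde R}|P|^{1-\alpha/d}\le C'|R|^{1-\alpha/d}.
\]
Here $\widetilde R$ denotes the at most $3^d$ cubes neighbouring $R$, together with the finitely many $P$ larger than $R$, whose contribution is geometric. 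The second inequality is Theorem \ref{t1}(iii). In particular $\mathcal{D}_{\gamma,E}$ is Carleson, which is (ii).

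To upgrade to well-sparse with respect to $E$, I would split $\mathcal{D}_{\gamma,E}$ into $N=N(d,k)$ subfamilies. Each subfamily is chosen so that, within it, distinct $Q$ map injectively to distinct $P$ with distinct position of $Q$ relative to $P$; this is possible by the multiplicity bound. Then each subfamily is handled via the free cubes $M(P)$ of Theorem \ref{t1}(ii), appropriately rescaled.

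Fallback approach. If the rescaling does not work, mimic the greedy stopping-time construction. Process cubes from large to small and pick inside each $Q$ a free dyadic subcube of size $\ge c\,\ell(Q)$ that avoids the previously chosen cubes. The Carleson estimate just proved (with the exponent $\alpha$ giving geometric decay) guarantees that the previously chosen cubes occupy at most half of $Q$ at sufficiently deep scales. Porosity applied to subcubes of $Q$ then yields an available free cube, and its side is comparable to $\ell(Q)$.
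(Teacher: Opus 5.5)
Your implications (iii)$\Rightarrow$(ii) and (ii)$\Rightarrow$(i) are correct. Your density-point argument even supplies the hypothesis $|\overline E|=0$ needed to invoke \cite{V}, which the paper does not check. Your map $Q\mapsto P$ with bounded multiplicity also gives (i)$\Rightarrow$(ii), in the same spirit as the paper's count with the dilated cubes $(2n+1)Q$.

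The gap is (i)$\Rightarrow$(iii), which is the only nontrivial part of (iii), and neither of your approaches delivers it.
\begin{itemize}
\item In the first approach, the cubes $M(P)$ of Theorem~\ref{t1}(ii) lie inside $P$ at a scale comparable to $\ell(P)=2^k\ell(Q)$. But $Q$ need not even lie in $P$: it lies in $\pi^kQ$, which may only be a neighbour of $P$. No ``rescaling'' moves a free cube into $Q$ while keeping it free of $E$ and disjoint from the others.
\item Splitting into $N$ subfamilies does not help either. Even if each subfamily were well-sparse, cubes selected for different subfamilies can overlap, and removing that overlap is the original problem.
\item In the fallback, the claim that earlier-chosen cubes occupy at most half of $Q$ does not follow from the $\alpha$-estimate. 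That estimate bounds $\sum|Q'|^{1-\alpha/d}$ over subcubes of $R$; it says nothing about how much of a fixed $Q$ is covered by cubes chosen for its ancestors.
\item Worse, cubes in $\mathcal{D}_{\gamma,E}\setminus\mathcal{D}_E$ are themselves free of $E$. A free cube chosen greedily for an ancestor can therefore swallow such a $Q$ entirely. There is also no largest cube from which to start the top-down process.
\end{itemize}

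The missing idea is to build nothing new and instead reduce to Theorem~\ref{t1} for an enlarged porous set, as the paper does via Theorem~\ref{t2}.
\begin{itemize}
\item The family $\mathcal{D}_{\gamma,E}$ is Carleson by (i)$\Rightarrow$(ii). It is closed under parents, because $\mathrm{dist}(\pi Q,E)\le\mathrm{dist}(Q,E)<\gamma\ell(Q)<\gamma\ell(\pi Q)$.
\item Let $\widetilde E$ be the set of left-down corners $a_Q$, $Q\in\mathcal{D}_{\gamma,E}$. Then $\mathcal{D}_{\gamma,E}\subset\mathcal{D}_{\widetilde E}$.
\item The proof of Theorem~\ref{t2} shows that $\mathcal{D}_{\widetilde E}$ is Carleson. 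Every cube of $\mathcal{D}_{\widetilde E}\setminus\mathcal{D}_{\gamma,E}$ lies on a corner chain $\mathcal{P}(Q)$, and these chains are summed by geometric series.
\item Theorem~\ref{t1}(ii), applied to the porous enlarged set, then gives pairwise disjoint cubes $M(Q)\subset Q$, $Q\in\mathcal{D}_{\gamma,E}$, free of that set.
\end{itemize}
One caution when you write this: the paper asserts $E\subset\widetilde E$, which fails in general, since a porous $E$ may be uncountable while $\widetilde E$ is countable. Apply Theorem~\ref{t1} to $E\cup\widetilde E$ instead. This set is porous because $\mathcal{D}_{E\cup\widetilde E}=\mathcal{D}_E\cup\mathcal{D}_{\widetilde E}$ is a union of two Carleson families. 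A density-point argument like yours shows that a Carleson $\mathcal{D}_F$ forces $|\overline F|=0$, so \cite{V} applies. Cubes free of $E\cup\widetilde E$ are in particular free of $E$.
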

Theorem \ref{t1} and Theorem \ref{t3} imply additional properties of a porous set.
\begin{corollary}\label{cor3}
Let   $E\subset\mathbb{R}^d$ be a set and $\gamma >0$. The porosity of    $E\subset\mathbb{R}^d$ is  equivalent to any of following assertions.
  \begin{enumerate}
   \item[(i)] There exist constants $\alpha >0$ and $\zeta>0$ such that  for each cube $R \in \mathcal{D}$
 \[ \sum_{Q \in \mathcal{D}_{\gamma, E} (R)}|Q|^{1-\alpha /d}\leq \zeta |R|^{1-\alpha /d}.\]
   \item[(ii)]    $|\overline{E}|=0$ and the Carleson packing condition of $\mathcal{D}_{\gamma, E}$ with respect to the weighted measure
 $d\mu_{\alpha,E}=\mathrm{dist}^{-\alpha }(x, E)dx $ holds. Namely,
    there exist the  constants $\alpha > 0$ and $\xi>0$ such that for each cube $R \in \mathcal{D}$

\[ \sum _ {Q \in \mathcal{D}_{\gamma, E}(R)}\mu_{\alpha,E}(Q)\leq \xi\;\mu_{\alpha,E}(R).\]
\item[(iii)] Dyadic Carleson embedding inequality. There exists $\alpha >0$ such that for each  $1\leq p< \infty$  and arbitrary sequence $\{a_Q: \; Q \in \mathcal{D}_{\gamma, E}(R) \}$ of non-negative scalars
\begin{equation}\label{e_t2}
  \bigg \|\sum_{Q \in \mathcal{D}_{ \gamma, E}(R)} a_Q \chi_Q\bigg\|_{p,\alpha} \leq C
\bigg
\|\sup_{Q \in\mathcal{D}_{\gamma, E}(R)} a_Q \chi_Q \bigg\|_{p,\alpha},
\end{equation}
where $C=C(\alpha, p, \gamma, E)$, $\chi_Q$ is the characteristic function of a cube $Q$ and  $L^p$-norm $\|f\|_{p,\alpha}$ is defined with respect to the weight measure
     $ d\mu_{\alpha,E}(x)$.
 \end{enumerate}
\end{corollary}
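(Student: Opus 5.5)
The plan is to reduce everything to Theorem \ref{t1} and Theorem \ref{t3}, after first comparing $\mathcal{D}_{\gamma,E}$ with $\mathcal{D}_E$.

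\textbf{Reverse implications.} These are immediate from $\mathcal{D}_E\subset\mathcal{D}_{\gamma,E}$.
\begin{itemize}
\item Assertion (i) gives (\ref{e_eqvi}), so $E$ is porous by Theorem \ref{t1}.
\item Assertion (ii) gives (\ref{e_dyn}), so $E$ is porous by Theorem \ref{t1}.
\item For (iii), take $p=1$ and $a_Q=|Q|^{-1}\mu_{\alpha,E}(Q)$ for $Q\subset R'$, $Q\in\mathcal{D}_{\gamma,E}$, and $a_Q=0$ otherwise. Then the left side of (\ref{e_t2}) is $\sum_{Q}\mu_{\alpha,E}(Q)\,|Q|^{-1}\mu_{\alpha,E}(Q)$, which is not the packing sum. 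A better choice is to test (\ref{e_t2}) with $a_Q\equiv1$ on $Q\subset R'$ and $p=1$. The left side becomes $\int\sum\chi_Q\,d\mu_{\alpha,E}=\sum_{Q\subset R'}\mu_{\alpha,E}(Q)$, and the right side is at most $C\mu_{\alpha,E}(R')$. This is exactly (ii) restricted to the family, so it yields (\ref{e_dyn}) and hence porosity, provided $|\overline E|=0$ (needed so that $\mu_{\alpha,E}$ is locally finite). That fact must be extracted separately, and I expect it is implicit in the meaning of the norm.
\end{itemize}

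\textbf{Direct implications.} Assume $E$ is porous. The key step is a comparison lemma: there is $N=N(\gamma,d)$ such that each $Q\in\mathcal{D}_{\gamma,E}(R)$ lies in $\mathcal{D}(\pi^kQ)$ for some ancestor $\pi^kQ$, $k\le k_0(\gamma)$, where $\pi^kQ$ is one of at most $N$ cubes of the same generation adjacent to a cube of $\mathcal{D}_E$. Concretely, choose $k_0$ with $2^{k_0}\ge\gamma$. If $\mathrm{dist}(Q,E)<\gamma\ell(Q)$, then $E$ meets the $l_\infty$-neighbourhood of $Q$ of width $2^{k_0}\ell(Q)$. Hence $Q$ lies in one of the $3^d$ neighbours (same size) of a cube $P\in\mathcal{D}_E$ with $\ell(P)=2^{k_0}\ell(Q)$. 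Thus each $Q\in\mathcal{D}_{\gamma,E}$ is associated to some $P\in\mathcal{D}_E$ with $\ell(P)=2^{k_0}\ell(Q)$, $|P|\asymp|Q|$, and $Q\subset 3P$. Each $P$ receives at most $3^d2^{k_0d}$ cubes $Q$.

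\textbf{Proving (i).} With this association,
\[
\sum_{Q\in\mathcal{D}_{\gamma,E}(R)}|Q|^{1-\alpha/d}\lesssim\sum_{P}|P|^{1-\alpha/d}.
\]
Here $P$ ranges over $\mathcal{D}_E$ with $P\cap 3R\ne\varnothing$ and $\ell(P)\le2^{k_0}\ell(R)$. These $P$ lie in $\mathcal{D}_E(R_j)$ for boundedly many dyadic cubes $R_j$ of side $2^{k_0}\ell(R)$ covering $3R$ (at most $3^d$ suitable ones, or use an ancestor argument via DL2). Applying (\ref{e_eqvi}) to each $R_j$ gives the bound $C|R|^{1-\alpha/d}$.

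\textbf{Proving (ii).} Use the same association and the doubling estimate $\mu_{\alpha,E}(3P)\lesssim\mu_{\alpha,E}(P)$. This should follow from (\ref{e_i}) together with the lower bound $\mu_{\alpha,E}(P)\ge|P|\,(\ell(P)+\mathrm{diam})^{-\alpha}\gtrsim|P|^{1-\alpha/d}$ when $P\cap E\ne\varnothing$. Then $\sum\mu_{\alpha,E}(Q)\le\sum_P\mu_{\alpha,E}(3P)\lesssim\sum_P|P|^{1-\alpha/d}\lesssim|R|^{1-\alpha/d}$. It remains to get $\lesssim\mu_{\alpha,E}(R)$. If $R\in\mathcal{D}_{\gamma,E}$, then $\mathrm{dist}(x,E)\lesssim\ell(R)$ on $R$, so $\mu_{\alpha,E}(R)\gtrsim|R|^{1-\alpha/d}$. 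If $R\notin\mathcal{D}_{\gamma,E}$, then $\mathcal{D}_{\gamma,E}(R)$ consists of small cubes clustered in subcubes of $R$ lying in $\mathcal{D}_{\gamma,E}$. In that case we decompose $R$ into its maximal subcubes in $\mathcal{D}_{\gamma,E}$, which are disjoint, and sum the bound over them.

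\textbf{Proving (iii).} This is the standard consequence of the $\mu_{\alpha,E}$-Carleson property (ii) with sparse sets $S(Q)$ of comparable $\mu_{\alpha,E}$-measure. The proof runs by duality, using the $\mu$-sparse domination $\sum a_Q\chi_Q\le$ stopping-time sums and the boundedness of the dyadic maximal function in $L^{p'}(\mu_{\alpha,E})$ (valid for any measure, dyadically).

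\textbf{Main obstacle.} The hardest step is (ii) in the case $R\notin\mathcal{D}_{\gamma,E}$, together with the doubling of $\mu_{\alpha,E}$ near $E$. I would handle it with the maximal-subcube decomposition above, using the lower bound $\mu_{\alpha,E}(Q)\gtrsim|Q|^{1-\alpha/d}$ valid for $Q\in\mathcal{D}_{\gamma,E}$.
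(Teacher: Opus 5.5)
Your argument is correct in substance, but for (i) and (ii) it takes a different route from the paper.

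\textbf{Comparison with the paper.} For the corollary, the paper does not compare $\mathcal{D}_{\gamma,E}$ with $\mathcal{D}_E$ again. It takes the set $\widetilde E$ of left-down corners of cubes of $\mathcal{D}_{\gamma,E}$. This set is porous by Theorem~\ref{t2} and satisfies $\mathcal{D}_{\gamma,E}\subset\mathcal{D}_{\widetilde E}$ and $\mu_{\alpha,E}\le\mu_{\alpha,\widetilde E}$. The paper then reads (i) and (ii) off Theorem~\ref{t1} and Dyn'kin's inequality for $\widetilde E$, closing (ii) with $|R|^{1-\alpha/d}\le(\gamma+2)^{\alpha}\mu_{\alpha,E}(R)$ for $R\in\mathcal{D}_{\gamma,E}$, just as you do.

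Your association $Q\mapsto P\in\mathcal{D}_E$, with $\ell(P)=2^{k_0}\ell(Q)$, $Q\subset 3P$ and bounded multiplicity, is essentially the dilation argument of Theorem~\ref{t3} (i)$\Rightarrow$(ii), run with exponent $1-\alpha/d$. It costs a little more work, but it buys two things:
\begin{itemize}
\item it keeps $\alpha$ and the constants tied to $E$ itself, so it yields every $0<\alpha<\mathrm{codim}_A E$, as claimed after the corollary;
\item it avoids the auxiliary set. Note that the inclusion $E\subset\widetilde E$ used in the paper really only holds for $\overline{\widetilde E}$, since a point of $E$ need not be a dyadic corner.
\end{itemize}
The paper's route is shorter given Theorems~\ref{t1}--\ref{t3}, and it also supplies the well-sparse free cubes that make (iii) immediate.

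One small repair on your side: for $\gamma>1$ the cube $P$ need not meet $3R$. However, $P$ has side at most $\ell(\pi^{k_0}R)$ and lies within distance $\gamma\ell(R)\le\ell(\pi^{k_0}R)$ of $R$. So it is contained in one of the $3^d$ same-generation neighbours of $\pi^{k_0}R$, which is the covering you actually need.

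\textbf{The case you call the main obstacle is empty.} Since $\mathrm{dist}(\pi Q,E)\le\mathrm{dist}(Q,E)<\gamma\ell(Q)<\gamma\ell(\pi Q)$, the family $\mathcal{D}_{\gamma,E}$ is closed under taking parents. Hence $R\notin\mathcal{D}_{\gamma,E}$ forces $\mathcal{D}_{\gamma,E}(R)=\varnothing$, which is exactly how the paper handles it. There are no small cubes clustered inside such an $R$, and your maximal-subcube decomposition is vacuous.

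\textbf{The direction (iii)$\Rightarrow$ porosity.} Your choice $p=1$, $a_Q\equiv1$ correctly reduces (iii) to the packing inequality in (ii); the paper only asserts this step. But $|\overline E|=0$ cannot be extracted from (iii) as written. For $E=\mathbb{R}^d$, $\mu_{\alpha,E}$ is infinite on every set of positive measure, so both sides of (\ref{e_t2}) are always $0$ or $+\infty$. Thus (iii) holds although $E$ is not porous. So $|\overline E|=0$ must be added to (iii) as a hypothesis, as it is in (ii). With it, your reduction closes through Theorem~\ref{t1} (iv)$\Rightarrow$(i).

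\textbf{The forward direction of (iii).} The $\mu_{\alpha,E}$-sparse sets you need can be taken to be central cubes $mQ\in\mathcal{D}_2(M(Q))$ of the free cubes from Theorem~\ref{t3}(iii). For these, $\mu_{\alpha,E}(mQ)\gtrsim|Q|^{1-\alpha/d}\gtrsim\mu_{\alpha,E}(Q)$ when $Q\in\mathcal{D}_{\gamma,E}$. Your duality argument with the dyadic $\mu_{\alpha,E}$-maximal function then goes through.
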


Observe that   Corollary \ref{cor3} holds for each
$0 < \alpha< \mathrm{codim}_A E$. Item (iii) refines  \cite[Theorem 2.10]{IV} up  to the Carleson embedding inequality (see \cite[p. 59, Lemma 5]{MC}).

 .

\section{Proofs}
\subsection{ Proof   of Theorem \ref{t1}}
 \subsubsection{Proof of (i)$\Leftrightarrow$ (ii). }

  By the definition of the porosity, it would be easy to construct the family of free cubes $M(Q)$ such that for each $Q \in\mathcal{D}_E$  one has  $ M(Q) \subset Q$ and $\eta | M(Q)| \geq |Q|$ with the  constant $\eta$. The  problem is to choose  the disjoint family $\{M(Q)\}_{Q \in \mathcal{D}_E}$. In fact, the argument is an elementary choice of one or two the largest free cubes for each  $Q \in \mathcal{D}_E$. We start with the lemma.
 \begin{lemma}\label{lem}
   Let  $E$ be a porous set with the porosity constant $\eta$. Given  a cube $R\in \mathcal{D}$. Then there is a  disjoint family of free cubes
   \[\mathcal{M}(R) = \{M(Q) \subset Q: Q \in \mathcal{D}_E(R);\;  2^d \eta | M(Q)| \geq |Q|\}.\]
    \end{lemma}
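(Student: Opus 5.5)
The plan is to assign to each $Q\in\mathcal{D}_E(R)$ a free cube taken from one of its children, so that different $Q$'s receive disjoint cubes. The key observation is that a free cube inside a child of $Q$ is contained in the free part of that child. Such a cube is disjoint from every dyadic cube meeting $E$ that is not an ancestor of the child.

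\textbf{Choice of $M(Q)$.} Fix $Q\in\mathcal{D}_E(R)$ and look at its children $Q_1,\dots,Q_{2^d}\in\mathcal{D}_1(Q)$. There are two cases.

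In the first case some child $Q_i$ is free. We put $M(Q)=Q_i$. Then $|M(Q)|=2^{-d}|Q|$, and since $\eta>1$ this gives $2^d\eta|M(Q)|\ge|Q|$.

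In the second case every child meets $E$. We pick one child $Q_1$ and apply porosity to $Q$ itself, which gives a free $M'\subset Q$ with $\eta|M'|\ge|Q|$. We cannot take $M'$ directly, since it may be contained in a smaller cube of $\mathcal{D}_E$ that needs its own cube. So we refine the choice as follows. Among all dyadic free subcubes of $Q$, choose one of maximal size, say $M^*$, with $\eta|M^*|\ge|Q|$. Because all children meet $E$, $M^*$ is strictly smaller than every child, so $M^*\subset Q_j$ for some $j$. Its parent $\pi M^*$ meets $E$ by maximality, and $\pi M^*\subseteq Q_j$. We set $M(Q)=M^*$ and record the ``owner'' cube $\pi M^*$.

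\textbf{Disjointness, the main obstacle.} If two chosen cubes $M(Q')$ and $M(Q'')$ intersect, then by (\ref{dyp}) one contains the other. Each $M(Q)$ is a free cube, so it cannot contain any cube of $\mathcal{D}_E$. A free cube $M(Q'')\subset Q''$ that intersects $Q'\in\mathcal{D}_E$ with $Q'\subsetneq Q''$ must therefore satisfy $M(Q'')\subset Q'$ or $M(Q'')\supset Q'$; the second is impossible. So the danger is that a maximal free cube of a big $Q''$ sits inside a smaller $Q'\in\mathcal{D}_E$, colliding with $M(Q')$.

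To avoid this I would reorganize the selection top-down: process the cubes of $\mathcal{D}_E(R)$ by generation. For $Q$ I reserve its largest free subcube lying in a child of $Q$ (one or two of the largest, as the text indicates), chosen at the top level $\pi M = $ child of $Q$ where possible. The cleanest variant I would try is the following. Let $M(Q)$ be a maximal free cube with $\pi M(Q)=Q$, i.e.\ $M(Q)\in\mathcal{F}_E(Q)\cap\mathcal{D}_1(Q)$, whenever $Q$ has a free child. These are automatically disjoint across different $Q$, since the parents differ.

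The remaining cubes are those all of whose children meet $E$. For each of them, follow a chain down through $\mathcal{D}_E$: porosity of $Q$ gives a free $M'$ with $\eta|M'|\ge|Q|$. Its maximal free ancestor $\tilde M\in\mathcal{F}_E(Q)$ has parent $P=\pi\tilde M\in\mathcal{D}_E$, with $|P|\ge 2^d|Q|/\eta$, so $P$ lies only boundedly many generations (about $\log_{2^d}\eta$) below $Q$. The collision issue is then that $P$ may itself claim $\tilde M$. I would resolve this by a counting/pigeonhole argument: each $P$ can be assigned to at most $N\approx\log_{2^d}\eta+1$ ancestors $Q$. We then split $\tilde M$ into $2^{dk}$ dyadic subcubes and give distinct ones to the at most $N$ claimants, choosing $k$ with $2^{dk}\ge N+1$. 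This loses a factor depending only on $d,\eta$.

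This distribution step is where I expect the real difficulty. Getting exactly the constant $2^d\eta$ claimed in Lemma~\ref{lem} requires showing that at most two claimants compete for each maximal free cube, hence the ``one or two largest free cubes'' in the text. I would verify this by showing that if $Q\supsetneq Q^\circ\supsetneq P$ all claim the same $\tilde M$, then $Q^\circ$ has a free child or a larger free cube, contradicting maximality of the choice for $Q$.
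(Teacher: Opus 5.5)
\textbf{There is a gap: your argument does not reach the constant $2^d\eta$ stated in the lemma.} Your ``cleanest variant'' is sound. A cube with a free child takes that child. Otherwise $Q$ takes the maximal free cube $\tilde M$ containing a porosity witness of $Q$, and each $\tilde M$ is split into $2^{dk}$ dyadic pieces shared among its claimants. This works because distinct maximal free cubes are disjoint, and every claimant of $\tilde M$ is an ancestor of side at most $\eta^{1/d}\ell(\tilde M)$, so there are at most about $\log_{2^d}\eta$ of them. But it only yields $2^{dk}\eta\,|M(Q)|\ge |Q|$ with $2^{dk}$ of order $\log_{2^d}\eta$, i.e.\ a constant of order $\eta\log\eta$.

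The step you propose to recover $2^d\eta$ (at most two claimants per $\tilde M$) is false. In $d=1$, let $P_j=\pi^jP_0$ for $0\le j\le m$, and let $\tilde M$ be the left half of $P_0$. Let $E$ consist of the left endpoints of the $2^{m+1}-1$ dyadic intervals of length $\ell(\tilde M)$ in $P_m$ other than $\tilde M$. Then:
\begin{itemize}
\item $E$ is porous with $\eta=2^{m+1}$;
\item for $j\ge 1$ both children of $P_j$ meet $E$;
\item $\tilde M$ is the unique largest free subinterval of every $P_j$.
\end{itemize}
So all $m+1=\log_2\eta$ cubes $P_0,\dots,P_m$ claim $\tilde M$. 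There is no contradiction with maximality, since no $P_j$ has a free cube larger than $\tilde M$. The ``key observation'' in your first paragraph fails for the same reason: a free cube in a child of $Q$ can lie in a smaller cube of $\mathcal{D}_E$, for instance its own parent.

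\textbf{What is missing is the paper's device, which keeps the number of competitors at one.} Do not take the globally largest free cube of $Q$. Instead, build $\mathcal{M}(R)$ top-down over the generations $\mathcal{D}_{N,E}(R)$, $N=0,1,\dots$, keeping this invariant: each $Q\in\mathcal{D}_{N,E}(R)$ contains, besides $M(Q)$, at most one cube assigned to a strict ancestor, and the two lie in different children of $Q$.

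Then each child $cQ\in\mathcal{D}_{1,E}(Q)$ contains at most one previously assigned cube. An assigned free cube cannot contain $cQ$, and cubes assigned to non-ancestors lie in cubes disjoint from $cQ$.
\begin{itemize}
\item If $cQ$ contains no previously assigned cube, let $M(cQ)$ be a porosity witness of $cQ$.
\item If it contains one, that cube lies in a single child of $cQ$. Take a porosity witness inside another child; this gives $2^d\eta\,|M(cQ)|\ge |cQ|$.
\end{itemize}
Either way $M(cQ)$ lies in a child of $cQ$ different from the one holding the inherited cube. So the invariant passes to generation $N+1$ and the constant is exactly $2^d\eta$. Your splitting argument proves only the weaker statement with some constant $C(d,\eta)$. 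That suffices qualitatively for (i)$\Rightarrow$(ii) of Theorem~\ref{t1}, but it is not the lemma as stated.
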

   \begin{proof}
    For $j=0,1,\dots$ let
\[\mathcal{D}_{j,E}(R)=\{Q \in \mathcal{D}_j(R): \:  Q \cap E \neq \varnothing\}.\]
Arguing by induction,  assume that for an integer $ N\geq 0$ there exists a pairwise disjoint family of free cubes
    \[ \mathcal{M}_N(R)=\{M(Q): \;  Q \in \bigcup _{0 \leq n \leq N}\mathcal{D}_{n,E}(R)\},\]
     with the properties
      \begin{enumerate}
         \item[(a)] $ 2^d \eta | M(Q)| \geq |Q|$;
          \item [(b)] Each $Q \in \mathcal{D}_{N,E}(R)$  in addition to    $M(Q)$ may contain no more than one cube $M(Q')$, assigned to  some ancestor $Q' \in \mathcal{D}_{n,E}(R)$, $ 0 \leq n <N$, $Q' \supset Q$.  In this  case there are two distinct children $c_1Q, c_2Q \in  \mathcal{D}_1 (Q)$  such that $c_1Q\supset M(Q)$ and    $c_2Q\supset M(Q')$, respectively.
  \end{enumerate}
 Observe, that the statement is obvious for $N=0$, since we simply take a maximal free cube $M(R)$.

   We check the properties (a)--(b) for $N+1$. For this consider   $\mathcal{D}_1 (Q) \subset\mathcal{D}_{N+1}(R)$ for each $Q \in \mathcal{D}_N(R)$ and choose the assigned free cube for each child in $ \mathcal{D}_{1,E} (Q)$.

   For those children $cQ \in \mathcal{D}_{1,E} (Q)$ such that  $cQ \neq c_1Q$ and  $cQ \neq c_2Q$, we simply choose the largest  free cube $M(cQ) \subset cQ$, $\eta |M(cQ)| \geq  |c Q|$.

   Then, consider the marked children  $c_1Q $ and $c_2Q$, if the latter exists.
    The argument is similar in  both cases.

 Take $c_1Q$ say,  by induction assumption, it already contains free cube
    $c_1Q \supset  M(Q)$.   If    $c_1Q=M(Q)$, that is $c_1Q \notin \mathcal{D}_{1,E} (Q)$, then there is nothing more to be done for $c_1Q$.

  Alternatively,
  if $M(Q) \varsubsetneq c_1Q$,  and recalling that  $M(Q)$ is already assigned to $Q$, we need to find the second  largest free cube $M(c_1Q) \subset c_1Q$  in order to assign it to $c_1Q$.
                     For this  consider  the family $\mathcal{D}_1(c_1Q) \subset \mathcal{D}_{N+2}(R)$. Take a sub-child cube  $ c(c_1Q) \in  \mathcal{D}_{1,E}(c_1Q)$ such that   $c(c_1Q) \cap M(Q) = \varnothing$.
  Choose the largest free cube in  $c(c_1Q)$, but assign it to $c_1Q$,  thus,  denoting it  by $M(c_1Q)$. It holds
     \[\eta |M(c_1Q)| \geq |c(c_1Q)|= 2^{-d}  |c_1Q|\]
    and thus,
     \[2^d \eta |M(c_1Q)| \geq    |c_1Q|\]
     with
 worsening a porosity constant.

     Arguing in the same way for $c_2Q$, we obtain
   all properties (a)--(b) for all cubes from $Q \in \mathcal{D}_{N+1,E}(R)$.
The proof of the lemma is completed.
   \end{proof}
 The next lemma is proved similarly by induction.
\begin{lemma}\label{lem1}
 Let  $E$ be a porous set with a porosity constant $\eta$. Given  a cube $R\in \mathcal{D}$ and a marked free cube
   $R'\subset R$ such that $\eta\,| R'| \geq  | R|$. Then there is a  disjoint family of free cubes
   \[\mathcal{M}(R) = \{M(Q):\;\; Q \in \mathcal{D}_{E}(R) \}\]
   of cubes such that  $2^d \eta \,| M(Q)| \geq |Q|$, and $R' \notin \mathcal{M}(R) $.
\end{lemma}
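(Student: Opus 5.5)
We run the same induction on generations as in the proof of Lemma \ref{lem}, but treat the prescribed free cube $R'$ as an extra cube that must be protected. The invariant is modified so that at every stage each $Q\in\mathcal{D}_{N,E}(R)$ contains at most two ``foreign'' free cubes. These are cubes of the already constructed disjoint family assigned to ancestors, together possibly with $R'$ (or a piece of $R'$). Each of them lies in its own child of $Q$, and that child is distinct from the children used for the others.

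\emph{Base step.} We do not take $M(R)=R'$. We pick the child $c\in\mathcal{D}_1(R)$ that contains $R'$, or, if $R'=R$ is impossible because $R\cap E\neq\varnothing$, the child meeting $R'$. We then choose some other child $c'R\ne c$ with $c'R\cap E\neq\varnothing$. If all other children are free, we simply take $M(R)$ to be one of them, which has size $2^{-d}|R|$. Otherwise we take the largest free cube in $c'R$ given by porosity. In either case $\eta|M(R)|\ge 2^{-d}|R|$. Now $R'$ and $M(R)$ sit in two distinct children of $R$, which is exactly property (b) of Lemma \ref{lem}, with $R'$ playing the role of the ancestor's cube.

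\emph{Inductive step.} From here on the argument of Lemma \ref{lem} applies verbatim. For a child $cQ$ containing no marked cube, take its largest free cube. For a marked child $c_iQ$ that contains a previously assigned free cube (or $R'$) properly, pass to a grandchild in $\mathcal{D}_{1,E}(c_iQ)$ disjoint from that cube. Such a grandchild exists because the marked cube is dyadic and strictly smaller than $c_iQ$, while $c_iQ\cap E\ne\varnothing$. Take the largest free cube of that grandchild and assign it to $c_iQ$, which again gives $2^d\eta|M(c_iQ)|\ge|c_iQ|$. If a marked child coincides with the marked free cube, it is free and not in $\mathcal{D}_E(R)$, so nothing is needed.

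The step I expect to be the main obstacle is the bookkeeping needed to keep the count of marked cubes per child at most one. A child $c_1Q$ may contain $M(Q)$, and also inherit $R'$ if $R'$ lies below $Q$ in the same child. To rule this out, I would arrange at each step that $M(Q)$ is chosen in a child of $Q$ different from the one carrying the inherited marked cube; this is exactly what the grandchild selection guarantees. Since all chosen cubes are dyadic free cubes, pairwise disjointness follows from (\ref{dyp}) and the construction. Finally, $R'\notin\mathcal{M}(R)$ because $R'$ is never selected: we always choose inside a cube disjoint from it.
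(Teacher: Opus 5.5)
This is essentially the paper's route: the paper only says Lemma \ref{lem1} is ``proved similarly by induction'' to Lemma \ref{lem}, and your adaptation is exactly that. You place $M(R)$ in a child of $R$ not containing $R'$, so $R'$ plays the role of the single ancestor-assigned cube in property (b); note the invariant is really ``at most one'' foreign cube per $Q$, not two. One small repair: your claim that a grandchild in $\mathcal{D}_{1,E}(c_iQ)$ disjoint from the marked cube always exists can fail, since the child containing the marked cube may carry all of $E\cap c_iQ$. In that case any other child of $c_iQ$ is free and can itself serve as $M(c_iQ)$, just as in your base step.
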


 To finish the proof of (i)$\Rightarrow$ (iii)
fix any cube $R \in \mathcal{D}_E$, apply
 Lemma \ref{lem} and obtain the claimed  disjoint family
   $\mathcal{M}(R)$ of free cubes.
Then,   extend up the construction to dyadic cubes containing $R$. Take the cube $\pi R \in \mathcal{D}_E$, for which  we need to choose and assign the largest cube $M(\pi R)$. It may be that  the largest cube of $\pi R$  is already assigned to $R$. In any case,  we choose the largest free cube in  a child  $ c\pi R \in \mathcal{D}_1(\pi  R) \setminus R$, denoting  $M(\pi R)$ and   assigning it to  $\pi R$.
It holds $2^d \eta | M(\pi R)| \geq |\pi R|$.

     Applying Lemma \ref{lem1} for this child $c\pi R$ and Lemma \ref{lem}  for the rest   children  in  $\mathcal{D}_1(\pi  R) \setminus R$, we  construct a  disjoint family
   \[\mathcal{M}(\pi R) = \{M(Q):\;\; Q \in \mathcal{D}_{E}(\pi R) \}\]
   of free cubes such that $2^d \eta\: | M(Q)| \geq |Q|$.

   We iterate putting
$R:=\pi  R$.
       The process is infinite, however,    by the property DL3 of lattice,  for each cube $Q \in \mathcal{D}_E$ after finite number of iterations depending on $Q$,  we define a free cube  $M(Q)\subset Q$. By the construction,  the family $M(Q)$ is disjoint,
  and $2^d \eta \:| M(Q)| \geq |Q|$.

   This completes the proof with the sparseness constant $\lambda = 2^d \eta$ provided $\eta$ is a porosity constant,
since the inverse implication
(ii)$\Rightarrow$ (i) is obvious.

\subsubsection{}
(i)$\Leftrightarrow$ (iii)  may be proved alternatively by the Ivrii-Nicolau argument \cite[Lemma 3.1]{IN}. For the sake of completeness and in order to illustrate    the well-sparseness property, we give the proof.

Thus, 
 by the well-sparseness property (ii), there is a collection  of
    pairwise disjoint free cubes  $ M(Q) \subset Q$, $ Q \in \mathcal{D}_E(R)$
  such that    $2^d\eta|M(Q)|\geq  |Q|$, where $\eta$ is a  porosity constant. Consequently,
   \[
\sum_{Q \in \mathcal{D}_E(R)}|Q|^{1-\alpha /d}\leq (2^d\eta)^{\alpha/d-1} \sum_{Q \in \mathcal{D}_E(R)}|M(Q)|^{1-\alpha/d}.
\]
  In order to separate from $E$  for each $M(Q)$ choose a child cube $mQ \in \mathcal{D}_2(M(Q))$. Since $mQ \cap E =\varnothing$, then

 \begin{align}
   \ell (mQ)=1/4\; \ell (M(Q),\label{e_d}\\
    \mathrm{dist} (mQ, E) \geq 1/4 \ell (M(Q)), \label{e_d1}
 \end{align}
  where we calculate in   $l_{\infty}$-metric. Combining (\ref{e_d}) and disjointness
  of the family $m(Q)$, it holds
    \[
   \aligned
\sum_{Q \in \mathcal{D}_E(R)}|Q|^{1-\alpha/d}
&\leq (8^d \eta)^{\alpha/d-1} \sum_{Q \in \mathcal{D}_E(R)}|m(Q)|^{1-\alpha/d}\\
& =(8^d \eta)^{\alpha/d-1} \sum_{Q \in \mathcal{D}_E(R)}\int_{m(Q)}|m(Q)|^{-\alpha/d} dx\\
& = (8^d \eta)^{\alpha/d-1} \sum_{Q \in \mathcal{D}_E(R)}\int_{m(Q)}\ell(m(Q))^{-\alpha } \;dx\\
&\leq(8^d \eta)^{\alpha/d-1}\sum_{Q \in \mathcal{D}_E(R)}\int_{m(Q)}\mathrm{dist} (x,E)^{-\alpha } \;dx\\
& \leq(8^d \eta)^{\alpha/d-1}\int_{R}\mathrm{dist} (x,E)^{-\alpha }\; dx
\;\leq \;C (8^d \eta)^{\alpha/d-1} |R|^{1-\alpha/d}\\
\endaligned
\]
where in the last line Dyn'kin's inequality (\ref{e_i}) with the constant $C=C(E,\alpha)$  is applied.
Thus, (\ref{e_eqvi}) follows.

Conversely,   the claim   follows modulo
 Jensen's inequality, because  for $ \alpha =0 $  the porosity of $E$ implies $|\overline{E}|=0$, and hence \cite[Theorem 7]{V} is applicable.

\subsubsection{}
(i)$\Rightarrow$(iv) follows from (i)$\Rightarrow$(iii) because for porous $E$ one has 
 \begin{equation}\label{ddd}
       |Q|^{1-\alpha /d}\approx \mu_{\alpha,E}(Q) 
     \end{equation}
uniformly with respect to $ Q \in \mathcal{D}_E$.

Conversely, in order  to prove  (iii)$\Rightarrow$(i),  suppose that  $|\overline{E}|=0$ and let
 $ \alpha >0 $ be such that  property   (\ref{e_dyn}) holds.
We clealy have   for each cube $Q \in \mathcal{D}_E$
 \begin{equation}\label{ccc}
  \mu_{\alpha,E}(Q) = \sum _ {Q' \in \mathcal{F}_E(Q)}\mu_{\alpha,E}(Q'),
\end{equation}
where instead of (\ref{ddd}), it holds
\[|Q'|^{1-\alpha /d}\approx \mu_{\alpha,E}(Q')\] 
 uniformly for all $ Q' \in \mathcal{F}_E$. However this observation alone is not sufficient to complete the proof.

 We follow an argument from \cite[Proposition 1]{V}.
Applying   (\ref{ccc}) for each term in the  sum on the left hand-side of (\ref{e_dyn}) and
 changing the summation order, we obtain
\[
\aligned
\sum _ {Q \in \mathcal{D}_E(R)}\mu_{\alpha,E}(Q)
=\sum_ {Q \in \mathcal{D}_E(R)}& \quad\sum_{ Q' \in \mathcal{F}_E(Q)  } \mu_{\alpha,E}( Q') \\
&=\sum_ {Q'\in \mathcal {F}_E(R)} \mu_{\alpha,E}( Q') \quad\sum_{ \substack { Q \in  \mathcal{D}_E(R)\\
  Q\supset Q'}} 1 .
\endaligned
\]
The inner sum above
  reduces to a number  of cubes from $\mathcal{D}_E(R)$ containing $Q'$. This equals  the  number of embedded dyadic cubes  in the sequence
  \[  \pi Q'\subset \pi^2 Q'\subset \dots \pi^n Q'.\]
Since   $\pi^n Q' = R$ is the largest cube in the sequence, we have
\[
 \sum_{ \substack { Q \in  \mathcal{D}_E(R)\\
  Q\supset Q'}} 1  = \log_2 \frac{\ell(R)}{\ell(Q')}.
\]
Thus
\[
\aligned
\sum _ {Q \in \mathcal{D}_E(R)}\mu_{\alpha,E}(Q)
&=\sum_ {Q'\in \mathcal {F}_E(R)} \mu_{\alpha,E}( Q') \log_2 \frac{\ell(R)}{\ell(Q')}.\\
\endaligned
\]
     By assumptions  (\ref{e_dyn}) of the  theorem and then again by identity (\ref{ccc}) applied to the cube $R$, we state
\[
\aligned
\sum_ {Q'\in \mathcal {F}_E(R)} \mu_{\alpha,E}( Q') \log_2 \frac{\ell(R)}{\ell(Q')}
& \leq \; \xi\; \mu_{\alpha,E}(R)\\
& = \;\xi\; \sum_ {Q'\in \mathcal {F}_E(R)} \mu_{\alpha,E}( Q')
\endaligned
\]
Choose   the largest free cube  $M(R) \in \mathcal{F}_E(R)$  such that $\ell(M(R)) \geq \ell(Q')$ for all cubes $ Q'\in \mathcal {F}_E(R)$, then
\[\log_2 \frac{\ell(R)}{\ell(M(R))}\leq \xi, \]
 and porosity of $E$ follows with the constant $2^\xi$.
The proof of Theorem   \ref{t1} is completed.

\subsection{Proof of Corollary \ref{eqvi}}
  By the definition of the Aikawa--Assouad codimension it is required to consider only  $\mathrm{codim}_A E >0$.
 In this case the proof of Theorem \ref{t1} (part (i)$\Rightarrow$ (iii) states   that for each $0\leq \alpha <\mathrm{codim}_A E$ the
property   (\ref{e_dyn}) holds.

Conversely,   put
$ \alpha >0 $ be such that (\ref{e_dyn})  holds.   Theorem \ref{t1}  states the porosity of  $E$. Thus by Theorem  \ref{t1}  it does not matter what a kind of estimates
 (\ref{e_dyn}) or (\ref{e_eqvi}) is to use.  Therefore, say in the case of (\ref{e_eqvi}), it holds
 \[
   C |R|^{1-\alpha/d}
\geq \sum_{Q \in \mathcal{D}_E(R)}|Q|^{1-\alpha/d}.
\]
 Now let  $\pi Q' \in \mathcal{D}_E(R)$ be assigned to $Q' \in \mathcal{F}_E(R)$.
 In fact, the cube $\pi Q'$  may be assigned to  other its children   $c (\pi Q') \in \mathcal{F}_E(R)$ such that $\pi Q' =\pi (c (\pi Q'))$.
 The number of all such children is estimated from above by  $2^d-1$. Therefore, the sum
 \[\sum_{Q' \in \mathcal{F}_E(R)}|\pi Q'|^{1-\alpha/d}\]
  may contain each summand  $|\pi Q'|^{1-\alpha/d}$  with multiplicity no more than  $2^d$. Thus, we estimate
 \[
   \aligned
  \sum_{Q \in \mathcal{D}_E(R)}|Q|^{1-\alpha/d}
&\geq 2^{-d} \sum_{Q' \in \mathcal{F}_E(R)}|\pi Q'|^{1-\alpha/d}.\\
\endaligned
\]
Observing  $|\pi Q'|= 2^d |Q'|$, we combine all the estimates
\[
   \aligned
C |R|^{1-\alpha/d}
&\geq  2^{-d}\sum_{Q' \in \mathcal{F}_E(R)} \;2^{d(1-\alpha/d)} \;| Q'|^{1-\alpha/d},
\endaligned
\]
which yields     Dyn'kin's inequality (\ref{aaa}) with a constant $ 2^{\alpha} C$. Thus, the claim  $\alpha < \mathrm{codim}_A E$ follows.

\subsection{ Carleson packing condition  implies  porosity. Proof   of Theorem \ref{t2}}

In order to  find a  set $E$ such that   $ \mathcal{S} \subset  \mathcal{D}_E$, one claims,  particularly,
 $Q \cap E \neq \varnothing $ for each $Q \in \mathcal{S}$.
 For each such cube $Q$ consider an infinite chain
\[\mathcal{P}(Q)=\{Q \supset cQ\supset
c^2Q \dots \}\subset \mathcal{D}(Q), \]
 where $c^k Q \in \mathcal{D}_k(Q) $  have the same left-down corner with $Q$. Namely, if
 \[Q= [a_{Q,1}, b_{Q,1}) \times \dots \times [a_{Q,d}, b_{Q,d})\]
  then   $c^k Q \ni a_Q=(a_{Q,j})_{j=1,\dots, d}$ for each $k=1, 2, \dots$. Though, some cubes $c^kQ$ can already belong to $\mathcal{S}$, this is not an issue for us.
 Let $E$ be a set of left-down corners  $a_Q$ of cubes $Q \in \mathcal{S}$. Clearly, $ \mathcal{S} \subset  \mathcal{D}_E$.

We prove that the  collection $\mathcal{D}_E$ defined by the constructed set $E$ satisfies to the Carleson packing condition.
Indeed, given a cube $R \in \mathcal{D}$ consider the family of cubes $Q' \in \mathcal{D}_E$, $Q' \subset R$. It holds

\[
\aligned
\sum_{\substack {Q' \in \mathcal{D}_E\\ Q' \subset R}}|Q'|
&= \sum_{\substack {Q' \in \mathcal{S}\cap \mathcal{D}_E\\ Q' \subset R}} |Q'| + \sum_{\substack {Q' \in  \mathcal{D}_E \setminus \mathcal{S}\\ Q' \subset R}} |Q'|\\
&=S_1\quad +\quad S_2.
\endaligned
\]
Since cubes from the first sum on the right hand side belong to $ \mathcal{S}$,  hence by assumptions of the theorem, we estimate this sum as
\[
S_1= \sum_{\substack {Q' \in \mathcal{S}\cap \mathcal{D}_E\\ Q' \subset R}} |Q'|\leq \xi |R|.
\]
with the Carleson constant $\xi$.

In order to estimate $S_2$, note that if $Q' \in \mathcal{D}_E \setminus \mathcal{S}$, then there exists
$Q \in \mathcal{S}$ such that its left-down corner  $a_Q $ belongs to $Q'$. Therefore,  $Q\cap Q' \neq \varnothing$,
and by the lattice property (\ref{dyp}), one has $Q\cap Q'= \{Q,\;Q'\}$.

We have $Q'\subset Q$. Indeed, in the case  $Q \subset  Q'$, one has  $Q \in \mathcal{D}(Q')$. Therefore,  by the assumptions of the theorem,  $Q \in \mathcal{S}$ implies  $Q' \in \mathcal{S}$, which contradicts to embedding $Q' \in \mathcal{D}_E \setminus \mathcal{S}$.

Thus, $Q'\subset Q$ implies   $Q'\in \mathcal{D}(Q)$. Since $a_Q \in Q'$, hence $a_Q$ is a left-down corner of $Q'$, as well, and $Q'$ belongs to a
 chain $\mathcal{P}(Q)$.
Therefore, we  write the second sum as a double sum
\[
\aligned
S_2
&=\sum_{\substack {Q' \in  \mathcal{D}_E \setminus \mathcal{S}\\ Q' \subset R}} |Q'|
 =\sum_{Q \in \mathcal{S}} \quad\sum_{\substack {Q'\in  \mathcal{P}(Q) \\Q'\subset R}}  |Q'|.
\endaligned
\]
Split the  outer sum according to wether $Q\subset R$, or $R\subset Q$, and obtain
\[
\aligned
S_2
=\sum_{\substack {Q \in \mathcal{S}\\ Q \subset R}} \quad\sum_{Q' \in  \mathcal{P}(Q)}  |Q'|+
\sum_{\substack {Q \in \mathcal{S}\\ R\subset Q}} \quad\sum_{\substack {Q' \in  \mathcal{P}(Q)\\ Q'\subset R}}& |Q'|\\
&=S_3\; + \; S_4.
\endaligned
\]
To estimate the  sum $S_4$,  observe that since $Q' \subset R \subset Q$, the left-down corner $a_Q$ of $Q$ coincides with the  left-down corner  of $R$. Therefore,  the double sum boils down to the unique inner sum
\[
S_4=\sum_{\substack {Q \in \mathcal{S}\\ R\subset Q}} \quad\sum_{\substack {Q' \in  \mathcal{P}(Q)\\ Q'\subset R}} |Q'|=
\sum_{Q' \in  \mathcal{P}(R)} |Q'|.
\]
By the  geometric progression formula,
\[S_4 \leq \sum_{Q' \in  \mathcal{P}(R)} |Q'|\leq \frac{2^d}{2^d-1}|R|.\]
Also by the  geometric progression formula, estimating the inner sum  in $S_3$,  and then by the Carleson packing condition with the constant $\xi$, it holds

\[
\aligned
S_3
 =\sum_{\substack {Q \in \mathcal{S}\\ Q \subset R}} \quad &\sum_{Q' \in  \mathcal{P}(Q)}  |Q'|\\
&\leq \sum_{\substack {Q \in \mathcal{S}\\ Q \subset R}} \quad \frac{2^d}{2^d-1}|Q|
\leq \frac{2^d \xi }{2^d-1}|R|.
\endaligned
\]
   Thus, the family $\mathcal{D}_E$ is Carleson with a constant
   \[C(\xi)=\xi + \frac{2^d}{2^d-1}+ \frac{2^d}{2^d-1} \xi. \]
     Observing   (i)$\Leftrightarrow $(ii) of Theorem \ref{t1}, we obtain  that $E$ is porous, which completes the proof.

\subsection{ Carleson property of  $ \mathcal{D}_{\gamma, E}$. Proof   of Theorem \ref{t3}}
 \subsubsection{}
  (ii)$\Rightarrow$(i) is easy. Indeed, since  $ \mathcal{D}_{ E} \subset \mathcal{D}_{\gamma, E}$ the family $\mathcal{D}_{ E}$ is Carleson, and the porosity of $E$ follows from \cite{BNT}, \cite{V}.

 \subsubsection{} Conversely, for (i)$\Rightarrow$(ii)
choose   the minimal integer $n$ such that
$n > \gamma$.
Fix a cube $R \in \mathcal{D}$. Let $\mathcal{D}_{\gamma, E}(R) = \{Q \in \mathcal{D}_{\gamma, E},\;
 Q \subset R \}$.
   For each cube $Q \in \mathcal{D}_{\gamma, E}(R)$
 define the dilated cube $\widetilde{Q}=  (2n+1) Q$ with the same center. Clearly, $\widetilde{Q} \subset  \widetilde{R}$, where  $\widetilde{R} =(2n+1) R $ is the dilated cube to $R$. By the definition of the family $\mathcal{D}_{\gamma, E}$, it holds that $\widetilde{Q} \cap E \neq \varnothing$, although it may be that  $\widetilde{Q} \notin  \mathcal{D}$.  The cube $\widetilde{Q}$ consists of $(2n+1)^d$ cubes of size $\ell(Q)$.
Among these cubes there is  at least  a cube $Q'$ such that $Q' \in \mathcal{D}_E$.

 On the other hand for each cube $Q'\in \mathcal{D}_E$ such that $Q'  \subset  \widetilde{R} $, there is no more than $(2n+1)^d$ cubes $Q \in \mathcal{D}_{\gamma, E}(R)$ such that $\ell(Q)=\ell(Q')$ and  $Q' \subset \widetilde{Q}=(2n+1) Q $. Therefore, we estimate the number of cubes
 $Q \in \mathcal{D}_{\gamma, E}(R)$ of the given size $\ell(Q)$ by $(2n+1)^d$ times the number of cubes
 $Q' \subset \widetilde{R} $,  $Q' \in \mathcal{D}_E$ with $\ell(Q')=\ell(Q)$.  Thus, it holds
\[
 \aligned
\sum_{Q \in
\mathcal{D}_{\gamma, E}(R)} \quad |Q|
  &\leq (2n+1)^d \sum_{ \substack {Q' \in\mathcal{D}_{ E}\\
  Q' \subset\widetilde{R}}} \quad|Q'|.
           \endaligned
  \]
  Although it may be  $\widetilde{R} \notin \mathcal{D} $, we choose the maximal cube $M(R) \in \mathcal{D}$,  $M(R) \subset \widetilde{R}$ and $ \ell(\widetilde{R})\leq 2 \ell(M(R)) < 2\ell(\widetilde{R}) $.   We easily cover $\widetilde{R}$  by  $N \leq 3^d$ dyadic cubes
  $R_i \in \mathcal{D} $  of the size $ \ell(R_i) = \ell(M(R)) $ as
 $\widetilde{R} \subset \bigcup_{i=1}^{N} R_i $.

   Since  \cite[Theorem 7]{V} the family $\mathcal{D}_{ E}$  is Carleson with   the  Carleson constant $C=C(E)$, therefore, the latter sum is estimated
  \[
 \aligned
  \quad  \sum_{ \substack {Q' \in\mathcal{D}_{ E}\\
  Q' \subset\widetilde{R}} } \;|Q'|
   \leq  \sum_{i=1} ^{N} &\sum_ {Q' \in\mathcal{D}_E( R_i)}  \;|Q'|\\
    &\leq C\;  \sum_{i=1} ^{N}|R_i| \;\leq C\; 3^d \; |\widetilde{R}|.
        \endaligned
     \]
 Observing  $|\widetilde{R}| =  (2n+1)^d |R|$, we  obtain
   the Carleson property of
the family $\mathcal{D}_{\gamma, E}$ with the constant $C(\gamma +1)^{d}6^d$, and (i)$\Rightarrow$(ii) follows.

 \subsubsection{} Since (iii)$\Rightarrow$(ii) is obvious, it remains to  prove
   (ii)$\Rightarrow$(iii).
Check that the family $\mathcal{D}_{\gamma, E}$ satisfies the assumptions of Theorem \ref{t2}.
Indeed,  $\mathcal{D}_{\gamma, E}$ is Carleson by assumption and   for each cube $Q \in \mathcal{D}_{\gamma, E}$
one has obviously $\pi Q \in \mathcal{D}_{\gamma, E}$.

Consequently, Theorem \ref{t2} implies that
there is a porous set $\widetilde{E}\in  \rd$ such that
$\mathcal{D}_{\gamma, E}  \subset \mathcal{D}_{\widetilde{E}}$.
 Observing that $\widetilde{E}$ consists of all left-down corners $a_Q$ of cubes $Q \in
\mathcal{D}_{\gamma, E}$, and since each point in $E$ just coincides with the left-down corner of the certain   cube $Q \in \mathcal{D}_E$, we conclude that $E \subset \widetilde{E}$.

By Theorem \ref{t1} $\mathcal{D}_{\widetilde{E}}$ is well-sparse with respect to $\widetilde{E}$. This means that for each  $Q \in \mathcal{D}_{\gamma, E} \subset \mathcal{D}_{\widetilde{E}}$ one can choose a  disjoint family of cubes $M(Q) \subset Q$ such that  $M(Q) \cap \widetilde{E} = \varnothing$
and $\lambda| M(Q)| \geq |Q|$ with $\lambda>1$ depending on $\widetilde{E}$ and $\gamma$.
Since $E \subset \widetilde{E} $ implies $M(Q) \cap E =\varnothing $, the claim
  follows.

Thus,  the proof of the theorem is completed.

\subsection{Proof of Corollary \ref{cor3}}
\subsubsection{}

  The  standard approach  to prove the equivalence of (iii) in Corollary \ref{cor3} and (ii) in Theorem \ref{t3} are  the dyadic   arguments  \cite[p. 59, Lemma 5]{MC} together  with   the maximal function techniques  \cite[Theorem 2.10]{IV}. However, with  the well-sparseness property in hand this one is obvious.

 Also if (i) or (ii) holds, then  the embedding $\mathcal{D}_{ E}  \subset \mathcal{D}_{\gamma, E} $ implies the  porosity of $E$ by Theorem \ref{t1}. Thus it remains to prove (i) and (ii).

\subsubsection{} Proof of (i).
Let $E$ be porous.
      The proof of Theorem \ref{t3} (ii)$\Rightarrow$(iii) gives a porous set $\widetilde{E} \supset E$ such that  $\mathcal{D}_{\gamma, E}  \subset \mathcal{D}_{\widetilde{E}}$.
   Then by Theorem \ref{t1} there exist constant $\alpha >0$ and $\zeta >0$ depending on $\widetilde{E}$ such that for each $R \in \mathcal{D}_{\widetilde{E}}$
    \[
 \aligned
   \sum_{ Q \in\mathcal{D}_{\gamma, E}(R) } \;|Q|^{1-\alpha/d}
   \leq  \sum_{ Q \in\mathcal{D}_{\widetilde{E}}(R) }& \;|Q|^{1-\alpha/d}\\
    &\leq \zeta |R|^{1-\alpha/d}.
        \endaligned
     \]
 This completes the proof, since if $R \notin \mathcal{D}_{\widetilde{E}}$, then the set  $\mathcal{D}_{\widetilde{E}}(R)$ is empty, and (i) holds trivially.
  \subsubsection{} Proof of (ii). Repeating the argument  (ii)$\Rightarrow$(iii) of Theorem \ref{t3} we obtain  a porous set  $\widetilde{E} \supset E$ such that
     $\mu_{\alpha,E}(Q) \leq \mu_{\alpha,\widetilde{E}}(Q)$ for each $Q \in \mathcal{D}$. Then applying Theorem \ref{t1} with porous $\widetilde{E}$ we get that  there exist two constants $\alpha >0$ and $\widetilde{\xi} >0$ such that
    \[
\aligned
\sum _ {Q \in \mathcal{D}_{\gamma, E}(R)}\mu_{\alpha,E}(Q)
&\leq \sum _ {Q \in \mathcal{D}_{\widetilde{E}}(R)}\mu_{\alpha,\widetilde{E}}(Q)\\
& \leq \widetilde{\xi} \mu_{\alpha,\widetilde{E}}(R)
\leq \;C \;\widetilde{\xi} \; |R|^{1-\alpha /d},\\
 \endaligned
\]
where in the last line we apply Dyn'kin's inequality (\ref{e_i}) with the constant $C>0$ independent of $R$.

  Now if $R \in \mathcal{D}_{\gamma, E}$, then by the elementary estimate for    $x \in R$
  \[\mathrm{dist}(x, E)\leq (\gamma +2)\ell(R),\]
 we have
   \[|R|^{1-\alpha /d}\leq \int_{R} \frac{(\gamma +2)^{\alpha}}{\mathrm{dist}^{\alpha}(x,E)}dx =
    (\gamma +2)^{\alpha}\mu_{\alpha,E}(R).\]
 That is required for (ii) with the constant $\xi=  (\gamma +2)^{\alpha} C \;\widetilde{\xi}$,
 since for $R \notin \mathcal{D}_{\gamma, E}$ the set  $\mathcal{D}_{\gamma, E}(R)$ is empty,
and (ii) holds trivially.

%\subsection{Ethical statement} The author declares no conflict of interest. This study did not involve human participants. No new data was created or analyzed in this study.

 %\appendix{\textbf{Thanks.}}

\bibliographystyle{amsplain}

\end{document}